\newcommand{\OS}{Ozsv\'ath--Szab\'o\ }
\newcommand{\Z}{\mathbb{Z}}  
\newcommand{\R}{\mathbb{R}}  
\newcommand{\LL}{\mathbb{L}}  
\newcommand{\M}{\mathbb{M}}  
\newcommand{\Lx}{\mathbb{L}\!^\times}  
\renewcommand{\S}{\mathrm{S}}  
\newcommand{\T}{\mathrm{T}}  
\renewcommand{\over}{over }
\newcommand{\HFhat}{\widehat{HF}}
\newcommand{\HFp}{HF^+}
\newcommand{\SL}{\mathrm{SL}}  
\newcommand{\SFH}{\mathrm{SFH}}
\newcommand{\twSFH}{\underline{\mathrm{SFH}}}
\newcommand{\twc}{\underline{c}}
\newcommand{\Tor}{\mathrm{Tor}}
\newcommand{\twV}{\underline{V}}
\newcommand{\twPhi}{\underline{\Phi}}
\newtheorem*{main-thm}{Main theorem}
\newtheorem{thm}{Theorem}
\newtheorem{lem}[thm]{Lemma}
\newtheorem{prop}[thm]{Proposition}
\newtheorem{cor}[thm]{Corollary}
\newtheorem{ex}[thm]{Example}
\newtheorem{defn}[thm]{Definition}
\author{Patrick Massot}
\title{Infinitely many universally tight torsion free contact structures 
with vanishing \OS contact invariants}
\date{December 2009}
\begin{document}

\maketitle

\begin{abstract}
\OS contact invariants are a powerful way to prove tightness of contact
structures but they are known to vanish in the presence of Giroux torsion. In
this paper we construct, on infinitely many manifolds, infinitely many isotopy
classes of universally tight torsion free contact structures whose \OS
invariant vanishes. We also discuss the relation between these invariants
and an invariant on $\T^3$ and construct other
examples of new phenomena in Heegaard--Floer theory.
Along the way, we prove two conjectures of K~Honda, W~Kazez and G~Matić about
their contact topological quantum field theory. Almost all the proofs in this
paper rely on their gluing theorem for sutured contact invariants.

\end{abstract}

\section*{Introduction}

Contact topology studies isotopy classes of contact structures\footnote{all
manifolds in this paper are oriented and all contact structures are positive}.
These classes come in two main flavors: overtwisted and tight, the latter being
further divided into universally tight and virtually overtwisted. Up to now,
besides homotopical data, there are only two algebraic objects which have been
successfully used to classify such isotopy classes on a general 3--manifold. The
first one is Giroux torsion introduced in \cite{Giroux_2000}, its definition is
recalled in Section \ref{S:partitions}. It is either a non-negative integer or
infinite and always infinite for overtwisted classes. It is invariant under
isomorphisms, not only isotopies. It shares the monotonicity property of
symplectic capacities \cite{HZ} on one hand and the finiteness property of
3--manifolds complexity \cite{Matveev} on the other hand. Indeed, if $(M, \xi)
\subset (M', \xi')$ then $\Tor(\xi) \leq \Tor(\xi')$ and, for fixed $M$ and $n$,
there are only finitely many isomorphism classes of contact structures on $M$
whose torsion is at most $n$. Another way to put it is to say that finite
torsion determines contact structures up to isomorphism and a finite ambiguity.
More generally, it plays an important role in the coarse classification of tight
contact structures \cite{CGH}. The second object, based on open book
decompositions \cite{Giroux_Pekin}, is \OS contact invariants introduced in
\cite{OSz_contact} which live in the Heegaard--Floer homology of the ambient
manifold. They come in various flavors depending on a choice of coefficients.
These invariants are a powerful tool to detect tightness and obstructions to
fillability by symplectic or complex manifolds. Its main properties are listed
in Theorem \ref{thm:ci} below. 

It is natural to investigate relations between these two invariants. In
\cite{GHV}, P~Ghiggini, K~Honda and J~Van Horn Morris proved that, whenever
Giroux torsion is non zero, the contact invariant \over $\Z$ coefficients
vanishes (we give a new proof of this result in Section \ref{S:vanishing}). Here
we prove that the converse does not hold.

\begin{main-thm}[Section \ref{S:vanishing}]
Every Seifert manifold whose base has genus at least three supports infinitely
many (explicit) isotopy classes of universally tight torsion free contact
structures whose \OS invariant \over $\Z$ coefficients vanishes.
\end{main-thm}

In the above theorem, the genus hypothesis cannot be
completely dropped because, for instance, on the sphere $\S^3$ and the torus
$\T^3$, all torsion free contact structures have non vanishing \OS invariants.
However, it may hold for genus two bases. Note that the class of Seifert
manifolds is the only one where isotopy classes of contact structures are pretty
well understood. So the theorem says that examples of universally tight torsion
free contact structures with vanishing \OS invariant exist on all manifolds we
understand, provided there is enough topology (the base should have genus at
least three). In this statement, isotopy classes cannot be replaced by conjugacy
classes because of the finiteness property explained above. Along the way we
prove Conjecture 7.13 of \cite{HKM_tqft}.

Our examples also provide a corollary in the world of Legendrian knots. \OS
theory provides invariants for Legendrian or transverse knots in different
(related) ways, see \cite{SV_leg} and references therein. In the standard
contact 3--spheres there are still two seemingly distinct ways to define such
invariants but, in general contact manifolds, the known invariants all come from
the sutured contact invariant of the complement of the knot according to the
main theorem proved by V~Vértesi and A~Stipsicz in \cite{SV_leg}. In this
paper they call strongly non loose those Legendrian knots in overtwisted contact
manifolds whose complement is tight and torsion free.  Corollary 1.2 of that
papers states that a Legendrian knot has vanishing invariant when it is not
strongly non loose. We prove that the converse does not hold.

\begin{thm}[see the discussion after Proposition \ref{prop:main}]
	\label{thm:leg}
There exists, in an overtwisted contact manifold, a null-homologous strongly non
loose Legendrian knot whose sutured invariant vanishes (the construction is
explicit).
\end{thm}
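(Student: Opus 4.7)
The plan is to combine the Main Theorem with a full Lutz twist on an explicit null-homologous Legendrian knot. I would start from a pair $(M,\xi)$ produced by the Main Theorem, so $M$ is Seifert with base of genus at least three and $\xi$ is universally tight, torsion-free, with vanishing \OS invariant over $\Z$. The argument for that theorem (via Proposition \ref{prop:main}) should realise $(M,\xi)$ as a gluing of explicit sutured contact pieces along convex tori, with at least one factor $(N,\zeta)$ having vanishing sutured invariant; I would choose $L$ to be a null-homologous Legendrian knot supported away from $N$ — for instance a Legendrian core of one of the solid tori appearing in the decomposition — so that $M\setminus\nu(L)$ still contains $(N,\zeta)$ as a sutured submanifold. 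The HKM gluing theorem applied to the induced sutured decomposition of $M\setminus\nu(L)$ then forces its sutured contact invariant to vanish.

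To make the ambient manifold overtwisted while preserving this complement, I would perform a full Lutz twist along $L$ to obtain a new contact structure $\xi'$ on $M$. By construction $\xi'$ agrees with $\xi$ outside a tubular neighborhood of $L$ and contains an overtwisted disk in a slightly enlarged neighborhood of $L$. The knot $L$ is therefore Legendrian in the overtwisted manifold $(M,\xi')$, and its complement is still tight (by universal tightness of $\xi$) and torsion-free (the modification is local and does not introduce torsion tori), so $L$ is strongly non loose. By the Stipsicz--Vértesi theorem the sutured Legendrian invariant of $L$ in $(M,\xi')$ equals the sutured contact invariant of $(M\setminus\nu(L),\xi')=(M\setminus\nu(L),\xi)$, which vanishes by the first paragraph.

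The hard part will be identifying $L$ explicitly inside the Main Theorem's construction so that removing its neighborhood preserves a sutured decomposition with a vanishing piece. If the construction is symmetric enough, a Legendrian core of a suitably chosen solid torus should work immediately; otherwise one must rerun the HKM gluing argument used for the Main Theorem after having removed the knot and check that the key factor with vanishing sutured invariant still appears. The emphasis on explicitness in the Main Theorem's statement suggests this verification is routine once the building blocks of Section \ref{S:vanishing} are in hand, but it is where the proof substantively depends on the internal structure of the Main Theorem rather than just its statement.
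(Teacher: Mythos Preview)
Your Lutz-twist step does not work. A Lutz twist is performed along a \emph{transverse} knot; the natural reading of ``full Lutz twist along $L$'' is to twist along a transverse push-off $T$ of $L$, and the modification then takes place in a neighborhood of $T$ which lies in the \emph{complement} of $L$. The new complement therefore contains the Lutz region, which carries an overtwisted disk and (for a full twist) $2\pi$--torsion, so $L$ is loose rather than strongly non loose. If instead you try to modify the contact structure inside the standard neighborhood $\nu(L)$ itself so that the overtwisted disk meets $L$, then $\nu(L)$ is no longer a standard neighborhood of $L$ in $\xi'$; the genuine standard neighborhood $\nu'(L)$ is strictly smaller, and the sutured complement $M\setminus\nu'(L)$ acquires the extra collar $\nu(L)\setminus\nu'(L)$ carrying whatever torsion or overtwistedness you inserted. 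Either way the identity $(M\setminus\nu(L),\xi')=(M\setminus\nu(L),\xi)$ that you invoke is false, and with it the claims that the complement is tight and torsion free.

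The paper's argument avoids this entirely by never starting from a tight ambient manifold. It takes the punctured-torus dividing set $K_0$ of Proposition~\ref{prop:main}, caps the boundary of $\Sigma_0$ with a disk $D$ crossed by an arc $a$ extending $K_0$ to a multicurve $\bar K_0\subset T^2$, and looks at a circle bundle $V\to T^2$ of Euler number $\pm 1$. The contact structure on $V$ partitioned by $\bar K_0$ is already overtwisted (the dividing set is not essential), the fiber $L$ over a point of $a$ is a null-homologous Legendrian, and the solid torus over $D$ is its standard neighborhood. Removing that neighborhood leaves exactly $(\Sigma_0\times\S^1,\xi_{K_0})$, which is tight and torsion free, with vanishing sutured invariant by Proposition~\ref{prop:main}. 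The overtwisted disk is thus built in so that it disappears precisely when one excises the standard neighborhood of $L$; no after-the-fact Lutz modification is needed, and indeed none can play that role.
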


After studying the relationship between \OS invariants and Giroux torsion, we
now turn to a more specific relation between these invariants and an invariant
defined only on the 3--torus. E~Giroux proved that any two incompressible
prelagrangian tori of a tight contact structure $\xi$ on $\T^3$ are isotopic. We
can then define the Giroux invariant $G(\xi) \in H_2(\T^3)/\pm 1$ to be the
homology class of its prelagrangian incompressible tori. Note that there is a
``sign ambiguity'' because these tori are not naturally oriented. Translated
into this language, Giroux proved that two tight contact structures on $\T^3$ are
isotopic if and only if they have the same Giroux invariant and the same Giroux
torsion, see \cite{Giroux_2000}. This invariant is clearly
$\text{Diff}(\T^3)$--equivariant. Since this group acts transitively on
primitive elements of $H_2(\T^3)$, we see that all these elements are attained by
$G$. This also proves that all tight contact structures on $\T^3$ which have the
same torsion are isomorphic. This classification of tight contact structures on
$\T^3$ and a result by Y~Eliashberg shows that torsion free contact structures on
$\T^3$ are exactly the Stein fillable ones.

\begin{thm}[Section \ref{S:t3}]
\label{thm:t3}
There is a unique up to sign $H_1(\T^3)$--equivariant isomorphism between
$\HFhat(\T^3)$ and $H^1(\T^3) \oplus H^2(\T^3)$ (on the ordinary cohomology side,
$H_1$ sends $H^1$ to zero and $H^2$ to $H^1$ by slant product). Under this
isomorphism, the \OS invariant of a torsion free contact structure on $\T^3$ is
sent to the Poincaré dual of its Giroux invariant. \end{thm}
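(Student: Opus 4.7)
The plan is to split the theorem into an algebraic part -- the abstract isomorphism and its uniqueness -- and a geometric part -- the identification of the contact invariant. For the algebraic part I would begin with the Ozsv\'ath--Szab\'o computation: $\HFhat(\T^3)$ is a free abelian group of rank 6 concentrated in the unique torsion $\spinc$ structure, and under its absolute $\Z/2$-grading (reduction of the $\mathbb{Q}$-grading) it splits as two rank-3 summands which the degree-$(-1)$ action of $H_1(\T^3)$ swaps. One checks that this action is square-zero and that its image is exactly one of the two graded pieces, with complementary quotient the other. Since slant product gives a non-degenerate map $H_1(\T^3)\otimes H^2(\T^3)\to H^1(\T^3)$, matching the image with $H^1(\T^3)$ and the quotient with $H^2(\T^3)$ produces an $H_1$-equivariant, grade-preserving isomorphism to $H^1\oplus H^2$ with the stated action. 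For uniqueness up to sign I would observe that any such automorphism of $H^1\oplus H^2$ cannot contain an off-diagonal term $H^2\to H^1$ (the two summands live in different $\Z/2$-gradings), so it decomposes as a pair $(B,A)$ with $B\in\mathrm{GL}(H^1)$ and $A\in\mathrm{GL}(H^2)$; the equivariance relation $B(\alpha\cdot v)=\alpha\cdot Av$, together with the non-degeneracy of slant, then forces $A=B=\pm\mathrm{id}$.

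For the contact invariant I would use Giroux's classification recalled in the introduction. Every torsion-free tight contact structure on $\T^3$ is Stein fillable and is determined up to isotopy by its Giroux invariant $G(\xi)\in H_2(\T^3)/\pm$, which is a primitive class; moreover $\mathrm{Diff}(\T^3)$ acts transitively on primitive elements of $H_2(\T^3)/\pm$. Both the \OS invariant $c(\xi)\in\HFhat(\T^3)$ and the Poincar\'e dual of $G(\xi)$ in $H^1(\T^3)\subset H^1\oplus H^2$ transform $\mathrm{Diff}(\T^3)$-equivariantly, so it suffices to identify them for a single well-chosen example. I would take the standard Stein-fillable contact structure $\xi_0$ with $G(\xi_0)=[\T^2\times\{\mathrm{pt}\}]$, coming from the Stein filling $\T^2\times D^2$.

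The remaining work is then a concrete computation of $c(\xi_0)$ inside $\HFhat(\T^3)$. Using a compatible open book (for instance, the one induced by the Stein filling) one obtains a Heegaard diagram for $\T^3$ in which $c(\xi_0)$ is the class of a specific intersection point; reading off its image in $H^1\oplus H^2$ via the isomorphism of the first paragraph should identify it with the Poincar\'e dual of $[\T^2\times\{\mathrm{pt}\}]$. The hard part will be this tracking exercise: matching the Heegaard-diagrammatic generators of $\HFhat(\T^3)$ with the abstract $H^1\oplus H^2$ picture, correctly identifying the $H_1$-orbits, and checking that the contact generator lands in the $H^1$-summand with the expected coordinates. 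Once this single base case is settled, the $\mathrm{Diff}(\T^3)$-equivariance of all the objects propagates the identification to every torsion-free contact structure on $\T^3$.
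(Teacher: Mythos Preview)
Your uniqueness argument for the $H_1$--equivariant isomorphism is correct and in fact a bit more direct than the paper's: the paper proves instead a key lemma stating that any $H_1(\T^3)$--equivariant isomorphism $\Phi$ automatically conjugates the two $\SL_3(\Z)$ actions, and then deduces uniqueness from the fact that primitive elements of $H^1\oplus H^2$ are determined up to sign by their $\SL_3$--stabilizers. Your linear-algebra computation bypasses this, but in doing so it loses the $\SL_3$--equivariance of $\Phi$, and that is exactly what you need later.

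The gap is in your propagation step. You argue that $c(\xi)$ is $\mathrm{Diff}(\T^3)$--equivariant in $\HFhat(\T^3)$ and that $PD(G(\xi))$ is $\mathrm{Diff}(\T^3)$--equivariant in $H^1$, and conclude that checking one example suffices. But to compare the two you must push $c(\xi)$ through $\Phi$, and the identity $\Phi(g_*c(\xi_0))=g_*\Phi(c(\xi_0))$ is precisely the statement that $\Phi$ intertwines the $\SL_3$ actions, which you never establish. You could recover this from your own uniqueness statement (for each $g$, the map $g^{-1}\Phi g$ is again $H_1$--equivariant, hence equals $\pm\Phi$; the sign is a homomorphism $\SL_3(\Z)\to\{\pm1\}$ and so is trivial), but this requires knowing that the $H_1$ and mapping class group actions on $\HFhat(\T^3)$ are compatible, and you should say so.

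More importantly, once $\SL_3$--equivariance of $\Phi$ is in hand, the explicit Heegaard--diagram computation you call ``the hard part'' becomes unnecessary. The paper simply observes that $c(\xi)$ lies in $\HFhat_{-1/2}\simeq H^1$ by the Hopf invariant, that both $c(\xi)$ and $PD(G(\xi))$ are primitive elements of $H^1$, and then uses the chain
\[
gG(\xi)=G(\xi)\iff G(g\xi)=G(\xi)\iff g\xi\sim\xi\implies c(g\xi)=c(\xi)\iff gc(\xi)=c(\xi)
\]
(the middle equivalence because $G$ is a complete invariant of torsion free contact structures) to get $\mathrm{Stab}_{\SL_3}(G(\xi))\subset\mathrm{Stab}_{\SL_3}(c(\xi))$. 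Since a primitive vector in $\Z^3$ is determined up to sign by its $\SL_3$--stabilizer, this forces $c(\xi)=\pm PD(G(\xi))$ with no diagram chasing at all. Your base-case computation, by contrast, would require matching abstract $H^1\oplus H^2$ generators with Heegaard generators and tracking the $H_1$ action explicitly, which is genuinely delicate and entirely avoidable.
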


Note that, on $\T^3 = \R^3/\Z^3$, cohomology classes can be represented by
constant differential forms and 1--dimensional homology classes by constant
vector fields. The slant product of the above theorem is then identified with
the interior product of vector fields with 2--forms.

The statement about torsion free contact structures is based on the
interaction between the action of the mapping class group and first homology
group of $\T^3$ on its \OS homology and ordinary cohomology. It sheds some light
on the sign ambiguity of the contact invariant since the sign ambiguity of the
Giroux invariant is very easy to understand.

\begin{cor}
There are infinitely many isomorphic contact structures whose isotopy
classes are pairwise distinguished by the \OS invariant. 
\end{cor}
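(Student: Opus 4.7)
The plan is to exhibit an explicit infinite sequence of torsion free tight contact structures on $\T^3$ which all lie in a single isomorphism class but which have pairwise distinct Giroux invariants; Theorem \ref{thm:t3} will then translate these distinct Giroux invariants into distinct \OS invariants, which is exactly what the corollary requires.

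First I would pick primitive classes $v_n \in H_2(\T^3) \cong \Z^3$ that are pairwise distinct modulo $\pm 1$, for instance $v_n = (n, 1, 0)$ for $n \geq 0$. By Giroux's classification of tight contact structures on $\T^3$ recalled in the introduction, for each $v_n$ there is a torsion free tight contact structure $\xi_n$ on $\T^3$ with $G(\xi_n) = \pm v_n$. Since $\text{Diff}(\T^3)$ acts transitively on primitive elements of $H_2(\T^3)$, I can choose diffeomorphisms $\varphi_n \in \text{Diff}(\T^3)$ sending $v_0$ to $v_n$; by equivariance of the Giroux invariant, $(\varphi_n)_* \xi_0$ has Giroux invariant $\pm v_n$ and is therefore isotopic to $\xi_n$. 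This shows that all $\xi_n$ represent the same isomorphism class, in fact the unique isomorphism class of torsion free tight contact structures on $\T^3$ identified in the introduction.

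Next I would apply Theorem \ref{thm:t3}: under the equivariant isomorphism $\HFhat(\T^3) \cong H^1(\T^3) \oplus H^2(\T^3)$, the \OS invariant of $\xi_n$, viewed in $\HFhat(\T^3)/\pm 1$, is sent to the Poincaré dual of $\pm v_n$, which lies in $H^1(\T^3)/\pm 1 \subset (H^1(\T^3) \oplus H^2(\T^3))/\pm 1$. Since the $v_n$ are pairwise distinct modulo sign, so are their Poincaré duals, so the \OS invariants of the $\xi_n$ are pairwise distinct in $\HFhat(\T^3)/\pm 1$. Because the \OS invariant is an isotopy invariant, this simultaneously proves that the $\xi_n$ are pairwise non-isotopic and that the non-isotopy is witnessed by the \OS invariant itself.

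Given Theorem \ref{thm:t3} and the classification of tight contact structures on $\T^3$, the deduction is essentially formal. The only point requiring care is the matching of the two sign ambiguities, one on the Giroux side coming from the fact that the prelagrangian tori are not naturally oriented, and one on the Heegaard--Floer side coming from the usual indeterminacy of the contact class. This compatibility is however already built into the statement of Theorem \ref{thm:t3}, so no further work is needed.
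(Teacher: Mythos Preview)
Your proposal is correct and is precisely the argument the paper intends: the corollary is stated without proof because it follows immediately from Theorem~\ref{thm:t3} together with the discussion preceding it (transitivity of the $\mathrm{Diff}(\T^3)$ action on primitive classes, equivariance of the Giroux invariant, and Giroux's classification), and you have simply made that deduction explicit.
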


Theorem \ref{thm:t3} proves, via gluing, a conjecture of Honda, Kazez and Matić
about the sutured invariants of $\S^1$--invariants contact structures on toric
annuli. This conjecture is stated in \cite{HKM_tqft}[top of page 35] and will be
discussed in Section \ref{S:t3} Proposition \ref{prop:annulus}.

Theorem \ref{thm:t3} also have some consequence for the hierarchy of
coefficients because $\Z_2$ coefficients can distinguish only finitely many
isotopy classes of contact structures (since $\HFhat(Y; \Z_2)$ is always
finite).

\begin{cor}
There exists a manifold on which the \OS invariant \over integer coefficients
distinguishes infinitely many more isotopy classes of contact structures than the
invariant \over $\Z_2$ coefficients.
\end{cor}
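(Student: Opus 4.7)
The plan is to take $Y=\T^3$ and combine Theorem~\ref{thm:t3} with Giroux's classification of tight contact structures on $\T^3$ recalled in the introduction. According to that classification, torsion free tight contact structures on $\T^3$ are parametrized up to isotopy by their Giroux invariant, which ranges over primitive classes in $H_2(\T^3;\Z)/\pm 1$. Since $H_2(\T^3;\Z)\cong \Z^3$ has infinitely many primitive classes up to sign, this provides an explicit infinite family $\{\xi_\alpha\}$ of pairwise non-isotopic (torsion free, Stein fillable) contact structures on $\T^3$.

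Next I would apply Theorem~\ref{thm:t3}: under the canonical $H_1(\T^3)$--equivariant isomorphism $\HFhat(\T^3;\Z)\cong H^1(\T^3;\Z)\oplus H^2(\T^3;\Z)$, the integral \OS invariant $c(\xi_\alpha)$ is the Poincaré dual of the Giroux invariant $G(\xi_\alpha)$. Because the Poincaré duality isomorphism sends distinct primitive classes in $H_2(\T^3;\Z)/\pm 1$ to distinct elements of $H^1(\T^3;\Z)/\pm 1$, the invariants $c(\xi_\alpha)$ are pairwise distinct up to sign, which is the inherent ambiguity of the contact invariant. Hence the \OS invariant \over integer coefficients separates all of the infinitely many isotopy classes $[\xi_\alpha]$.

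Finally, since $\HFhat(\T^3;\Z_2)$ is a finitely generated $\Z_2$--vector space it is a finite set, so the $\Z_2$--coefficient \OS invariant takes only finitely many values in total; by pigeonhole, at most finitely many of the $[\xi_\alpha]$ can be pairwise distinguished \over $\Z_2$. Subtracting a finite number from infinity yields infinitely many isotopy classes that integer coefficients tell apart but $\Z_2$ coefficients do not, which is the content of the corollary. The only substantive input is Theorem~\ref{thm:t3}; once it is available the proof is a bookkeeping argument combining Giroux's classification with the finiteness of $\HFhat(\T^3;\Z_2)$, so there is essentially no remaining obstacle.
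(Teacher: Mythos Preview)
Your argument is correct and matches the paper's own reasoning: the paper does not give a separate proof of this corollary but simply notes, in the sentence preceding it, that $\HFhat(Y;\Z_2)$ is always finite, so Theorem~\ref{thm:t3} immediately yields the result on $\T^3$. Your write-up just spells this out in detail.
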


In the same spirit, we prove that twisted coefficients are more powerful than
$\Z$ coefficients even when the latter give non vanishing invariants.

\begin{prop}[see Propositions \ref{prop:annulus}]
There exist a sutured manifold with two contact structures having the same
non vanishing \OS invariant \over $\Z$ coefficients but which are distinguished
by their invariants \over twisted coefficients.
\end{prop}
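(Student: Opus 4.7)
The plan is to take the sutured manifold to be a toric annulus $T^2 \times [0,1]$ equipped with standard parallel sutures on each boundary torus, and the two contact structures to be a pair of $S^1$-invariant tight contact structures. The idea is to leverage Theorem \ref{thm:t3} together with the HKM gluing theorem: identify the twisted sutured invariants on the annulus with twisted \OS invariants on a closed $T^3$ obtained by capping off.

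First, I would analyze the family of $S^1$-invariant tight contact structures on $T^2\times[0,1]$ matching a chosen boundary dividing set. These are classified by a discrete rotation parameter, and Proposition \ref{prop:annulus} should give an explicit formula for their sutured invariants, over both $\Z$ and twisted coefficients, in terms of cohomology of the torus. The expressions over twisted coefficients carry the extra $\Z[H^1]$-grading information that records the global rotation, while over $\Z$ only a coarser residue survives.

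Next I would pick two such structures $\xi_1, \xi_2$ so that, when glued to a fixed contact complement, they produce closed tight torsion free contact structures $\xi_1', \xi_2'$ on $T^3$ having distinct Giroux invariants (easy to arrange by varying the rotation parameter) but equal \OS invariants over $\Z$. By Theorem \ref{thm:t3}, the twisted invariants of $\xi_1', \xi_2'$ are the Poincaré duals of these distinct Giroux classes and are hence distinct in $\twHFhat(T^3)$. Via the HKM gluing theorem both the distinction over twisted coefficients and the equality over $\Z$ descend to the sutured invariants of $\xi_1, \xi_2$ on the toric annulus, which is the desired conclusion.

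The main obstacle is ensuring the exact equality of the $\Z$-coefficient invariants of $\xi_1'$ and $\xi_2'$, not just their indistinguishability in rank. One route is to exploit the sign-and-$H_1$ ambiguity discussed after Theorem \ref{thm:t3}: distinct Giroux invariants related by the $H_1(T^3)$ action together with the $\pm 1$ ambiguity can collapse to identified classes in $\HFhat(T^3;\Z)$, while their twisted refinements retain the distinction because the $\Z[H^1]$ module structure breaks the symmetry. A secondary technical point is correctly identifying the twisted coefficient modules across the gluing so that the sutured twisted invariant really does map to the twisted \OS invariant of the glued-up structure; this bookkeeping should be routine once Proposition \ref{prop:annulus} is in place.
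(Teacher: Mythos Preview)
Your proposal has two genuine gaps that prevent it from working.

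First, the plan to find torsion-free contact structures $\xi_1',\xi_2'$ on $\T^3$ with \emph{distinct} Giroux invariants but \emph{equal} \OS invariants over $\Z$ is self-contradictory in light of Theorem~\ref{thm:t3}: that theorem says precisely that, for torsion-free contact structures on $\T^3$, the $\Z$-coefficient \OS invariant equals (up to sign) the Poincar\'e dual of the Giroux invariant. So distinct Giroux invariants force distinct $\Z$-coefficient invariants. The ``sign-and-$H_1$ ambiguity'' you invoke does not help: the $H_1$-action on $\HFhat(\T^3)$ sends the $H^2$ summand to the $H^1$ summand by slant product, it is not an automorphism identifying distinct contact classes, and the $\pm 1$ ambiguity is already absorbed in the definition of both invariants. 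Also note that Theorem~\ref{thm:t3} is a statement about $\HFhat$ over $\Z$, not about $\twHFhat$.

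Second, even granting a pair of closed structures with equal $\Z$-invariants, the HKM gluing map goes the wrong way for your purpose: from $\Phi(c(\xi_1))=\Phi(c(\xi_2))$ you cannot conclude $c(\xi_1)=c(\xi_2)$ in the sutured group, since $\Phi$ need not be injective. Gluing can only propagate \emph{distinctions} back to the sutured invariants, not equalities.

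The paper's argument avoids both issues by never leaving the sutured category. On the toric annulus $(A\times\S^1,F_A\times\S^1)$ it takes three dividing sets $K_1^A,K_2^A,K_3^A$ lying in the same rank-two graded summand of $\twV(A,F_A)$, and computes the effect of two self-gluing maps $\twPhi_1,\twPhi_2$ (stacking a copy of $K_1^A$ or $K_2^A$) using overtwistedness, isotopy, and the Ghiggini--Honda formula for generalized Lutz twists. This yields the explicit relation $\tilde\twc_1^A=a\,\tilde\twc_2^A+b(t-1)\,\tilde\twc_3^A$ with $a,b\in\Lx$. Setting $t=1$ gives $c(K_1^A)=c(K_2^A)$ over $\Z$ directly, while the same gluing maps (sending $\tilde\twc_1^A$ to a $(t-1)$-multiple and $\tilde\twc_2^A$ to zero, for instance) distinguish the twisted classes. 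Non-vanishing over $\Z$ comes from embedding into a Stein fillable manifold.
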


In Section 1 we review the work of Giroux on certain contact structures on
circle bundles, the easy extension of this work to Seifert manifolds and torsion
calculations. In Section 2 we review \OS contact invariants. In Section 3 we
prove Theorem \ref{thm:t3}. In Section 4 we review the work of Honda, Kazez and
Matić on their contact TQFT and upgrade their $\SFH$ groups calculations to
twisted coefficients. In Section 5, by far the longest, we prove
\cite{HKM_tqft}[Conjecture 7.13] and the main theorem above.

\section{Partitioned contact structures on Seifert manifolds}
\label{S:partitions}

This section contains preliminary results in contact topology. We first recall
the crucial definition of Giroux torsion. The $k\pi$-torsion of a contact
manifold $(V, \xi)$ was defined in 
\cite{Giroux_2000}[Definition 1.2] to be the supremum of all integers $n \ge 1$
such that there exist a contact embedding of  
\[
\left(T^2 \times [0, 1], \ker\left(\cos(n k\pi z)dx - 
\sin(n k\pi z)dy\right)\right), \qquad (x,y,z) \in T^2 \times [0,1]
\] 
into the interior of $(V, \xi)$ or zero if no such integer $n$ exists.
Of course all $k\pi$--torsions can be recovered from the $\pi$--torsion. However
when we don't specify $k$ we mean $2\pi$-torsion. This is due to the fact that
only $2\pi$-torsion is known to interact with symplectic fillings and \OS
theory.

A multi-curve in an orbifold surface $B$ is a 1--dimensional submanifold
properly embedded in the regular part of $B$. When $B$ is closed, we will say
that a multicurve is essential in $B$ if none of its components bound a disk
containing at most one exceptional point.

Since we want to extend results from circle bundles to Seifert manifolds and
most surface orbifolds are covered (in the orbifold sense) by smooth surfaces,
the following characterization will be useful.

\begin{lem}
\label{lem:essential_curves}
Let $\Gamma$ be a multicurve in a closed orbifold surface $B$ whose (orbifold)
universal cover is smooth. The following statements are equivalent:
\begin{enumerate}
\item 
$\Gamma$ is essential;

\item
$\Gamma$ lifts to an essential multicurve in all smooth finite covers of $B$.

\item
$\Gamma$ lifts to an essential multicurve in some smooth finite cover of $B$.
\end{enumerate}
\end{lem}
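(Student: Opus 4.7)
My plan is to translate essentiality into a group-theoretic condition on the orbifold fundamental group $\pi_1^{orb}(B)$ and then argue by ordinary covering space theory. The key translation, call it $(\ast)$, is: a simple closed curve $\gamma$ in the regular part of $B$ bounds a disk containing at most one exceptional point if and only if $[\gamma]$ is torsion in $\pi_1^{orb}(B)$. The ``if'' half of $(\ast)$ is clear from the standard presentation of $\pi_1^{orb}(B)$. The ``only if'' half uses that $B$ is a good orbifold (since its universal cover is smooth), so that every torsion element of $\pi_1^{orb}(B)$ is conjugate to a power of a local meridian $x_j$ around an exceptional point, together with a classical argument showing that a simple closed curve realizing such a torsion class can be isotoped to the boundary of a small disk around one cone point. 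Combined with the observation that every smooth finite cover $\tilde{B}$ is a closed orientable surface and so has torsion-free $\pi_1$, this reduces everything to comparing torsion in $\pi_1^{orb}(B)$ with triviality in the finite-index subgroup $\pi_1(\tilde{B})$.

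For $(1) \Rightarrow (2)$, let $q\co \tilde{B} \to B$ be any smooth finite cover and $\tilde{\gamma}$ a component of $q^{-1}(\gamma)$ for some component $\gamma$ of $\Gamma$. The restriction $q|_{\tilde{\gamma}}$ is a $k$--fold cover of circles, so the image of $[\tilde{\gamma}]$ under the inclusion $\pi_1(\tilde{B}) \hookrightarrow \pi_1^{orb}(B)$ is a conjugate of $[\gamma]^k$. If $[\gamma]$ is non-torsion then so is $[\gamma]^k$, hence $[\tilde{\gamma}] \neq 1$ and $\tilde{\gamma}$ is essential in $\tilde{B}$. The implication $(2) \Rightarrow (3)$ is trivial once we know smooth finite covers exist, which is ensured by Selberg's lemma applied to $\pi_1^{orb}(B)$. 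For $(3) \Rightarrow (1)$ I argue contrapositively: if some component $\gamma$ of $\Gamma$ bounds a disk with at most one exceptional point, then by $(\ast)$ the class $[\gamma]$ is torsion, so every lift $\tilde{\gamma}$ represents $[\gamma]^k$, a torsion element of $\pi_1^{orb}(B)$ which must vanish inside the torsion-free subgroup $\pi_1(\tilde{B})$. Hence $\tilde{\gamma}$ is null-homotopic in $\tilde{B}$ and therefore bounds a disk, so $\tilde{\Gamma}$ is not essential in \emph{any} smooth finite cover.

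The delicate step is the ``only if'' half of $(\ast)$. Everything else is formal book-keeping with covering spaces of good orbifolds, but showing that a simple closed curve with torsion orbifold class is geometrically isotopic to the boundary of a disk with one cone point requires a dedicated argument. I expect this to be the main obstacle in writing a fully self-contained proof; it can be circumvented by lifting to a smooth finite cover first, noticing that a simple closed curve with torsion class must have null-homotopic lifts, and then pushing the resulting disks back down to obtain a disk in $B$ with at most one exceptional point.
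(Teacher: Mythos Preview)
Your approach is correct in outline but takes a genuinely different route from the paper, and it leaves exactly the gap you yourself flag. You reduce everything to the translation $(\ast)$: $\gamma$ bounds a disk with at most one cone point iff $[\gamma]$ is torsion in $\pi_1^{orb}(B)$. Note first that you have the two halves of $(\ast)$ swapped: ``$\gamma$ bounds a disk $\Rightarrow$ $[\gamma]$ torsion'' is the direction that is immediate from the presentation, while ``$[\gamma]$ torsion $\Rightarrow$ $\gamma$ bounds a disk with at most one cone point'' is the hard one. Your text describes them correctly but labels them backwards. More importantly, your argument for $(1)\Rightarrow(2)$ genuinely needs this hard direction (to pass from ``$\gamma$ essential'' to ``$[\gamma]$ non-torsion''), and you do not prove it: knowing that a torsion element is conjugate to some $x_j^k$ in $\pi_1^{orb}(B)$ does not immediately give an \emph{embedded} disk in $B$, because the free homotopy from $\gamma$ to a small meridian is an orbifold homotopy and may cross cone points, so the classical ``homotopic simple closed curves are isotopic'' theorem for surfaces does not apply directly. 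Your proposed circumvention---lift to a smooth finite cover, find a disk there, push it down---is precisely the paper's strategy, but you omit the one nontrivial step: why does the pushed-down region have at most one cone point?

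The paper avoids $(\ast)$ entirely. For $(1)\Rightarrow(2)$ it argues the contrapositive geometrically: if some lift $\hat\gamma$ bounds a disk $\hat D$ in a smooth cover $\hat B$, then $\hat D$ maps to a disk $D$ in $B$, and multiplicativity of the orbifold Euler characteristic forces $\chi^{orb}(D)>0$; since $\chi^{orb}(D)=1-s+\sum 1/\alpha_i\le 1-s/2$ for $s$ cone points, one gets $s\le 1$. For $(3)\Rightarrow(1)$ it simply lifts the offending disk. This Euler-characteristic count is the missing ingredient in your circumvention and is more elementary than establishing the full $(\ast)$. Your $\pi_1$ approach is conceptually clean and, once $(\ast)$ is in hand, makes the equivalence transparent (and your remark about needing Selberg's lemma for $(2)\Rightarrow(3)$ is a fair point the paper glosses over); but as written it trades a short computation for a harder lemma you have not proved.
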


\begin{proof}
We first prove (the contrapositive of) (1) $\implies$ (2).
Let $\Gamma$ be a essential multicurve in $B$ and $\pi$ be an orbifold covering
map from a smooth surface $\hat B$ to $B$. Suppose that a component of the inverse
image of $\Gamma$ bounds an embedded disk $\hat D$ in $\hat B$. Its image in $B$
is a topological disk $D$ and we only need to prove that this disk contains at
most one exceptional point. Using multiplicativity of the orbifold Euler
characteristic under the orbifold covering map from $\hat D$ to $D$, we get
$\chi(D) > 0$. This proves that $D$ contains at most one exceptional points
because its Euler characteristic is $1 - s + \sum_{i=1}^s 1/\alpha_i$ with
$\alpha_i \geq 2$ if it has $s$ exceptional points so $\chi(D) \leq 1 - s/2$. So
(1) implies (2). Since (2) obviously imply (3), we are left with proving
(the contrapositive of) (3) implies (1).

Assume that $\Gamma$ is not essential and let $D$ be a connected component of
the complement of $\Gamma$ in $B$ which is a disk with at most one exceptional
point. In any finite cover $\hat B$ of $B$, this disk lifts to a collection of
disks bounded by components of the lift of $\Gamma$ and containing at most one
exceptional point. So $\Gamma$ is non essential in all finite covers of $B$. 
\end{proof}

The following is the essential definition of this section.

\begin{defn}[obvious extension of \cite{Giroux_2001}]
  \label{def:partition}
A contact structure is partitioned by a multi-curve $\Gamma$ in $B$ if it
transverse to the fibers over $B \setminus \Gamma$ and if the surface
$\pi^{-1}(\Gamma)$ is transverse to $\xi$ and its characteristics are fibers.
\end{defn}

\begin{ex}[\cite{Lutz,Tsuboi_91}, see also \cite{Klaus_these}]
Let $V \to B$ be a Seifert manifold and $\Gamma$ be a non empty multi-curve in
$B$ whose class in $H_2(B, \partial B ; \Z_2)$ is trivial. There is a
$S^1$--invariant contact structure on $V$ which is partitioned by $\Gamma$.
This contact structure is unique up to isotopy among $S^1$--invariant contact
structures.
\end{ex}

The following theorem relies on \cite{gcs}[Theorem~A] and on easy extensions or
consequences of the fourth part of \cite{Giroux_2001}. Of course it also uses a
lot the results of \cite{Giroux_2000}. The two papers by Giroux can also be
replaced by the Honda versions \cite{Honda_I, Honda_II}. This theorem could be
easily improved to say things about Seifert manifolds with non empty boundary
but we won't need such improvements. Recall that a closed Seifert manifold is
small if it has at most three exceptional fibers and its base has genus zero.
Otherwise it is called large. In particular the bases of large Seifert manifolds
admit essential multi-curves. We denote by $e(V)$ the rational Euler number of a
Seifert manifold $V$. See \cite{gcs} for the conventions used here for Seifert
invariants and Euler numbers. In the statement we exclude for convenience the
(finitely many) Seifert manifolds which are torus bundles over the circles (see
for instance \cite{Hatcher_3D} to get the list).

\begin{thm}
  \label{thm:classif_partition}
Let $V$ be a closed oriented Seifert manifold over a closed oriented orbifold
surface.
\begin{enumerate}
\item 
A contact structure on $V$ partitioned by a multi-curve $\Gamma$ is
universally tight if and only if one of the following holds:
  \begin{enumerate}
  \item $\Gamma$ is empty
  \item $V$ is large and $\Gamma$ is essential
  \item $V$ is a Lens space (including $\S^3$ and $\S^2 \times \S^1$), 
	$e(V) \geq 0$, $\Gamma$ is connected and each component of its complement
	contains at most one exceptional point.
  \end{enumerate}

\item 
Any universally tight contact structure on $V$ is isotopic to a partitioned
contact structure.

\item 
Suppose $V$ is not a torus bundle over the circle. Let $\xi$ be a contact
structure on $V$ partitioned by an essential multi-curve $\Gamma$.  Let $n$ be
the greatest integer such that there exist $n$ closed components of $\Gamma$ in
the same isotopy class of curves. The Giroux torsion of $\xi$ is zero if
$\Gamma$ is empty and at most $\lfloor \frac{n}{2} \rfloor$ otherwise.

\item 
Let $\xi_0$ and $\xi_1$ be contact structures on $V$ partitioned by non empty
multi-curves denoted by $\Gamma_0$ and $\Gamma_1$ respectively.  If $\Gamma_0$
and $\Gamma_1$ are isotopic then $\xi_0$ and $\xi_1$ are so. If $\xi_0$ and
$\xi_1$ are isotopic and universally tight then $\Gamma_0$ and $\Gamma_1$ are
isotopic.
\end{enumerate}
\end{thm}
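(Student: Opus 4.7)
The plan is to reduce the Seifert case to the circle bundle case treated by Giroux in \cite{Giroux_2001, Giroux_2000} (or by Honda in \cite{Honda_I, Honda_II}) by passing to smooth orbifold covers. Because we have excluded torus bundles and the relevant base orbifolds admit a smooth orbifold universal cover, any such $V$ is finitely covered by a genuine circle bundle $\hat V \to \hat B$ with $\hat B$ a smooth surface. Lemma~\ref{lem:essential_curves} guarantees that essentiality of the partitioning multi-curve is preserved under such lifts, and the $S^1$-invariant partitioned contact structure upstairs is the pullback of the one downstairs, so statements about universal tightness, Giroux torsion, and isotopy of partitioned contact structures can be transported back and forth between $V$ and $\hat V$.

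For part (1) I would argue case by case. When $\Gamma$ is empty, the partitioned contact structure is a $C^0$-small perturbation of the Seifert fibration viewed as a foliation, whose universal tightness is standard. When $V$ is large with $\Gamma$ essential, I pull back to $\hat V$; the lift is partitioned by an essential multi-curve in $\hat B$, hence universally tight by the circle bundle classification, and tightness descends to $V$. When $\Gamma$ is non-essential on a large $V$, a component bounding a disk with at most one exceptional point produces an overtwisted disk by a direct bypass argument on the fibers over that disk. The Lens space subcase (1)(c) falls outside the covering strategy and must be handled by appealing directly to the classification of tight contact structures on Lens spaces; the conditions $e(V) \ge 0$, $\Gamma$ connected, and at most one exceptional point per region are precisely what is needed to match one of the tight structures in that list. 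Part (2) is proved by first isotoping a universally tight $\xi$ so that it is transverse to a generic Seifert section, then invoking the fourth part of \cite{Giroux_2001} to make it $S^1$-invariant on the complement of the exceptional fibers, then extending across the exceptional fibers using standard solid torus models.

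For part (3), the lower bound comes from an explicit embedding of the standard $2\pi$-torsion model into a neighborhood of $n$ parallel closed components of $\Gamma$, producing $\lfloor n/2 \rfloor$ nested copies. The upper bound follows from the convex-surface torsion calculations of \cite{Giroux_2000} applied after lifting to $\hat V$: any embedded torsion block would convexly intersect the fibers above an isotopy class of $B$-curves in at least two parallel arcs of $\Gamma$, and the exclusion of torus bundles rules out extra torsion hidden in global monodromy. Part (4) in the easy direction is immediate from the uniqueness statement in the Example after Definition~\ref{def:partition}, carried along an isotopy between $\Gamma_0$ and $\Gamma_1$. In the hard direction, for universally tight partitioned $\xi$, I would recover the isotopy class of $\Gamma$ as the projection of the locus where $\xi$ is tangent to fibers and show by convex surface theory that this class is an invariant of the isotopy class of $\xi$.

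The main obstacle will be this hard direction of part (4): showing that the tangency locus is well-defined up to isotopy and projects to a multi-curve preserved by isotopies of $\xi$, particularly near exceptional fibers where the fibration is only an orbifold bundle. A secondary difficulty is the small Seifert / Lens space subcase of (1), which cannot be reached by lifting and must be handled by appeal to the existing Lens space classification.
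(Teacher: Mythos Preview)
Your overall covering strategy for part~(1) matches the paper, and the Lens space subcase is indeed handled by direct appeal to the solid torus / Lens space classification.  However, several of your later steps diverge from the paper in ways worth noting.

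For part~(2), your sketch (``isotope to be transverse to a generic section, then make $S^1$--invariant away from exceptional fibers'') misses the organizing dichotomy the paper actually uses: the \emph{maximal twisting number}.  If the maximal twisting number is negative, \cite{gcs}[Theorem~A] already gives a contact structure transverse to all fibers (empty partition).  If it is non-negative, one has a Legendrian regular fiber $L$ with contact framing equal to fibration framing, and the paper builds the partition by hand: take a wedge of circles in the smooth part of $B$ based at $\pi(L)$ whose complement is a union of disks each containing at most one exceptional point, partition $\xi$ over a regular neighborhood $R$ of this wedge by Giroux's circle-bundle techniques, and then extend over each complementary solid torus using the solid-torus classification.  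The subtle point is checking that $\xi$ restricted to each such solid torus is still universally tight; the paper does this by embedding the universal cover of the solid torus into $\tilde V$, which works except in the Hopf-fibration case, ruled out by the twisting-number hypothesis.  Your proposal does not supply this mechanism.

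For part~(3) you add a lower-bound argument, but the theorem only asserts an upper bound; the paper proves nothing more.  For the empty-$\Gamma$ case the paper again uses negative maximal twisting: positive $\pi$--torsion would force the Legendrian direction on a family of prelagrangian tori to cross the fiber direction, contradicting negative twisting.  Your sketch does not mention this.

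The most significant divergence is the hard direction of part~(4).  You propose to recover $\Gamma$ as the projection of the fiber-tangency locus and then argue this locus is an isotopy invariant; you correctly flag this as the main obstacle, especially near exceptional fibers.  The paper bypasses this difficulty entirely.  It observes that essential multi-curves in the regular part $R$ of $B$ are determined up to isotopy by their geometric intersection numbers with closed curves (Thurston), and that these intersection numbers admit a contact-topological interpretation (Giroux \cite{Giroux_2001}, Section~4.E) which is manifestly invariant under isotopy of $\xi$.  This is a cleaner route: rather than tracking a geometric tangency locus through an isotopy, one extracts numerical invariants of $\xi$ that already pin down $\Gamma$.
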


We first comment on some consequences of this theorem which have not much to do
with the main stream of the present paper.
We can deduce from it and \cite{Lisca_Matic} (or \cite{gcs}) the list (given in
corollary \ref{cor:ut_list} below) of Seifert manifolds which carry universally
tight contact structures. This list did not appear in the literature while the
(much subtler) list of Seifert manifolds which carry tight contact structures
(maybe virtually overtwisted) was obtained (with much more work) by P~Lisca and
A~Stipsicz in \cite{LS_existence}. In addition, the road taken in that paper to
prove existence on large Seifert manifold is much heavier than using the above
theorem (but the point of that paper is small manifolds).

\begin{cor}
\label{cor:ut_list}
A closed Seifert manifold $V$ admits a universally tight contact structure if and
only if one of the following holds:
\begin{enumerate}
	\item $V$ is large
	\item $V$ is a Lens space (including $\S^3$ and $\S^2 \times \S^1$)
	\item $V$ has three exceptional fibers which can be numbered
	such that its Seifert
	invariants are $(0, -2, (\alpha_1,\beta_1), (\alpha_2,\beta_2),
	(\alpha_3,\beta_3))$ with
	\[ \frac{\beta_1}{\alpha_1} > \frac{m-a}{m}, \quad 
 \frac{\beta_2}{\alpha_2} > \frac{a}{m}, \quad\text{and} \quad
 \frac{\beta_3}{\alpha_3} > \frac{m-1}{m}\]
 for some relatively prime integers $0 < a < m$.
\end{enumerate}
\end{cor}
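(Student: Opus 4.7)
The plan is to combine parts~(1) and~(2) of Theorem~\ref{thm:classif_partition} with the two existence results for partitioned contact structures already at hand in this section: the example following Definition~\ref{def:partition} (for non-empty $\Gamma$ whose class in $H_2(B,\partial B;\Z_2)$ is trivial) and \cite{gcs}[Theorem~A] (for $\Gamma=\emptyset$, i.e.\ $S^1$-invariant contact structures transverse to all the fibers). By part~(2) of Theorem~\ref{thm:classif_partition}, $V$ admits a universally tight contact structure if and only if it admits one partitioned by some $\Gamma$ satisfying one of the three alternatives~(a),~(b),~(c) of part~(1); the task reduces to deciding for which $V$ such a $\Gamma$ (together with the corresponding partitioned contact structure) exists.

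For the sufficiency direction I would treat the three listed cases in turn. If $V$ is large, the base $B$ has either positive orbifold genus or at least four exceptional points, and in either subcase one easily exhibits an essential simple closed curve on $B$; doubling it if needed to arrange the $\Z_2$-homology condition, case~(b) applies. If $V$ is a Lens space, a single simple closed curve on $B=\S^2$ separating the (at most two) exceptional points yields case~(c) after possibly reversing orientation to arrange $e(V)\geq 0$. If the invariants of $V$ satisfy the inequalities of~(3), \cite{gcs}[Theorem~A] provides a contact structure transverse to all the fibers, which is partitioned by $\Gamma=\emptyset$ and hence universally tight by case~(a).

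For necessity I would argue by elimination. Suppose $V$ admits a universally tight contact structure and is neither large nor a Lens space; then $V$ is small with exactly three exceptional fibers and $B=\S^2$. Any simple closed curve on $\S^2$ bounds two disks, at least one of which contains at most one of the three exceptional points, so no multi-curve in $B$ is essential. Alternatives~(b) and~(c) of Theorem~\ref{thm:classif_partition}(1) are thus unavailable, which forces $\Gamma=\emptyset$, and the existence of an $S^1$-invariant contact structure transverse to the fibers of such a $V$ is again controlled by \cite{gcs}[Theorem~A].

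The main obstacle is the translation between the abstract criterion of \cite{gcs}[Theorem~A] and the explicit inequalities displayed in~(3), in particular the appearance of the auxiliary parameters $0<a<m$ and the normalization constant $-2$ in the Seifert invariants. This step is a book-keeping exercise in Seifert invariant conventions rather than a new contact-geometric argument; once the conventions of \cite{gcs} are aligned with those used here, the inequalities follow by routine manipulation, and no further contact-topological input is needed beyond Theorem~\ref{thm:classif_partition}.
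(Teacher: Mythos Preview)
Your outline is essentially the approach the paper has in mind: the paper does not give a detailed proof of this corollary but simply says it follows from Theorem~\ref{thm:classif_partition} together with \cite{Lisca_Matic} (or \cite{gcs}), and your proposal is a reasonable fleshing-out of that sketch.

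One caveat on the Lens space case: ``reversing orientation to arrange $e(V)\geq 0$'' is ambiguous and, if read as reversing the orientation of the 3--manifold, is not legitimate (a positive universally tight contact structure on $-V$ is not one on $V$). What does work is either reversing the Seifert fibration (simultaneously the fiber and base orientations, keeping the orientation of $V$), which negates $e$, or splitting into the two subcases $e(V)\geq 0$ (case~(c)) and $e(V)<0$ (case~(a), using the existence of a transverse invariant contact structure recalled right after Corollary~\ref{cor:ut_char}). With that clarification the argument goes through, and the remaining translation between \cite{gcs}[Theorem~A] and the displayed inequalities is indeed the book-keeping you describe.
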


The above theorem also proves that all universally tight contact structures on
Seifert manifolds interact nicely with the Seifert structure.

\begin{cor}
\label{cor:ut_char}
If $\xi$ is a universally tight contact structure on a closed Seifert manifold
$V$ then there exist a locally free $\S^1$ action on $V$ such that $\xi$
is either transverse to the orbits or invariant.
\end{cor}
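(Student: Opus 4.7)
The plan is to apply Theorem~\ref{thm:classif_partition}(2) to isotope $\xi$ to a partitioned contact structure $\xi'$ with respect to the Seifert fibration $\pi \co V \to B$, and then to use the locally free $\S^1$--action $\rho$ coming from the Seifert structure (or a conjugate of it) as the desired action. I would split the analysis according to whether the partitioning multi-curve $\Gamma$ is empty.

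If $\Gamma$ is empty, then by the very definition of ``partitioned'' $\xi'$ is transverse to every fiber of $\pi$, hence to every orbit of $\rho$. If $\Gamma$ is non-empty, then $\xi'$ being universally tight (since it is isotopic to $\xi$) places us in case~(1)(b) or~(1)(c) of Theorem~\ref{thm:classif_partition}; in either situation the Example following Definition~\ref{def:partition} produces an $\S^1$--invariant contact structure $\xi''$ partitioned by the same $\Gamma$, and part~(4) of Theorem~\ref{thm:classif_partition} supplies an isotopy from $\xi'$ to~$\xi''$, since both are universally tight and partitioned by the same multi-curve. Thus $\xi$ is isotopic to the $\rho$--invariant contact structure $\xi''$.

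To transport the conclusion from $\xi'$ (resp.\ $\xi''$) back to~$\xi$, I would invoke Gray stability to obtain an ambient isotopy $\phi_t$ of $V$ with $\phi_0 = \mathrm{id}$ carrying $\xi$ to $\xi'$ (resp.\ $\xi''$), and then observe that $\xi$ is transverse to the orbits of (resp.\ invariant under) the conjugate $\S^1$--action $\phi_1^{-1}\,\rho\,\phi_1$, which remains locally free because conjugation by a diffeomorphism preserves the local freeness of an action.

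The main delicate point is the non-empty case, which hinges on matching the existence statement of the Example with the uniqueness statement of Theorem~\ref{thm:classif_partition}(4). In particular, one must check that the topological hypothesis on $\Gamma$ required by the Example is automatic in cases~(1)(b) and~(1)(c): this should follow from the alternating transverse orientation of the fibers on the two sides of each component of $\pi^{-1}(\Gamma)$ that the partitioning condition already imposes, but this verification is the one piece of actual work that the argument needs beyond a formal appeal to Theorem~\ref{thm:classif_partition}.
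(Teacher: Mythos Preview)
Your argument is correct and is exactly the deduction the paper has in mind; the corollary is stated there without proof precisely because it follows from Theorem~\ref{thm:classif_partition} in the way you outline. Two small remarks: in your appeal to part~(4) you do not actually need universal tightness of $\xi'$ and $\xi''$ for the direction ``same $\Gamma$ implies isotopic contact structures'', so that hypothesis is harmless but superfluous; and your sketch of the $\Z_2$--homology verification is right --- the contact condition along $\pi^{-1}(\Gamma)$ forces the coorientation of $\xi'$ relative to the fibers to flip across each component of $\Gamma$, so $\Gamma$ bounds the positive region and is null-homologous.
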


Note that the alternative in the above corollary is not exclusive. A contact
structure which is both invariant and transverse to the orbits of a locally free
$\S^1$ action exists exactly when $e(V) < 0$, this was proved by Y~Kamishima and
T~Tsuboi in \cite{Tsuboi_91}. There is only one isomorphism class of contact
structure of this type when they exist. This class is of Sasaki type and
sometimes called the canonical isomorphism class of contact structures on $V$.

\begin{proof}[Proof of Theorem \ref{thm:classif_partition}]
We now outline the main differences between Theorem \ref{thm:classif_partition}
and the parts which are already written in \cite{Giroux_2001}. First it should
be noted that, when $V$ is either a Lens space or a solid torus with a standard
Seifert fibration, everything is well understood thanks to the classification
theorems of \cite{Giroux_2000} (see also \cite{Honda_I}). So we don't consider
these Seifert manifolds in the following.

1) 
Let $\xi$ be a contact structure on a closed $V$ partitioned by $\Gamma$.
If $\Gamma$ is empty then $\xi$ is transverse to the fibers hence
universally tight according to \cite{gcs}[Theorem A] (this direction follows
rather directly from Bennequin's theorem). If $V$ is large and $\Gamma$ is
essential then the base $B$ of $V$ is covered (in the orbifold sense) by a
smooth surface $\Sigma$ and there is a corresponding circle bundle 
$\hat V \to \Sigma$ covering (honestly) $V$. The pulled back contact structure
is partitioned by the inverse image of $\Gamma$ which is essential according to
Lemma \ref{lem:essential_curves} so $\xi$ is universally tight according to
\cite{Giroux_2001} (first line of page 252).

Conversely, assume that $\xi$ is universally tight and partitioned by a non
empty multi-curve $\Gamma$. Assume first the base of $V$ is covered by a smooth
surface of genus at least one (for instance if $V$ is large). The manifold $V$
then is covered by a circle bundle over that surface as above. We get from
\cite{Giroux_2001}[Theorem 4.4] that the lifted contact structure is partitioned
by a multi-curve, unique up to isotopy, which is essential.  Since the lift of
$\Gamma$ is such a curve, it is essential and Lemma \ref{lem:essential_curves}
implies that $\Gamma$ is also essential. In particular $V$ is large.

If no such cover of the base exists (and $V$ is not a Lens space) then its base
$B$ is a sphere with exceptional points of order $(2, 2, n)$, $(2, 3, 3)$, $(2, 3, 4)$
or $(2, 3, 5)$ (see \cite{Thurston_chap_13}[Theorem 13.3.6]). In each case 
$B$ is covered by $\S^2$ and all
curves in the regular locus of $B$ bounds a disk whose pre-image in $\S^2$ is
disconnected so $\xi$ is virtually overtwisted according to
\cite{Giroux_2001}[Proposition 4.1 and Lemma 4.7].

2) 
Recall that a contact structure on a Seifert manifold is said to have
non-negative maximal twisting number\footnote{some texts say zero twisting
number in this case} if it is isotopic to a contact structure for which there
exists a Legendrian regular fiber whose contact framing coincides with the
fibration framing. If this property is not satisfied then \cite{gcs}[Theorem A]
ensures that any universally tight $\xi$ is isotopic to a contact structure
partitioned by the empty multi-curve (i.e.  transverse to the fibers). We now
assume that $\xi$ has non negative maximal twisting number and has been isotoped
so that it admits a Legendrian fiber $L$ as above. Let $K$ be a wedge of circles
based at $L$ in the smooth part of $B$ (seen as the space of all fibers) let $R$
be a small regular neighborhood of $K$. We can choose $K$ an $R$ such that the
complement $R'$ of $R$ in $B$ is made of disks containing exactly one
exceptional point. The techniques of \cite{Giroux_2001} prove that $\xi$ is
isotopic to a contact structure which, over $R$ is partitioned by a multicurve
$\Gamma_R$ which intersects all boundary components of $R$. We now assume this
property. Let $V'$ denote the (non necessarily connected) Seifert manifold over
$R'$ and $\xi'$ the restricted contact structure. Since $\Gamma_R$ intersects
all components of $\partial R$, each component $T$ of the boundary of $V'$
contains a Legendrian regular fiber which is either a closed leaf or a circle of
singularities of the characteristic foliation $\xi' T$. If $\xi'$ is universally
tight then the classification of universally tight contact structures on solid
tori directly implies that $\xi'$ is $\partial$--isotopic to a contact structure
partitioned by some $\Gamma_{R'}$ extending $\Gamma_R$ and we are done. More
precisely, for each component $W$ of $V'$, this classification guaranties the
existence of exactly one isotopy class of universally tight contact structure
coinciding with $\xi'$ on $\partial W'$ when $W$ contains no exceptional fiber
and two otherwise. In the latter case, the two classes correspond to the two
isotopy classes of arcs extending $\Gamma_R$ inside the base of $W$ (which is a
disk with one exceptional point).

So it remains to prove that if $\xi$ has non negative maximal twisting number
and is universally tight then each solid torus $W$ isotopic to a fibered one has
a universally tight induced contact structure. This is obvious if the universal
cover $\tilde W$ of $W$ naturally embeds into the universal cover $\tilde V$ of
$V$.  This $\tilde V$ can be built in two stages: first one takes the (orbifold)
universal cover of the base $B$ and pulls back the Seifert fibration and then
one unwraps the fibers as much as possible. The sought embedding of $\tilde W$
obviously exist when the fibers can be completely unwrapped. Due to the
classification of orbifolds surfaces the only problematic case if one excludes
Lens spaces is when $\tilde V$ is $\S^3$ with its (smooth) Hopf fibration. But,
by definition of tightness, any tight contact structure on $\S^3$ has 
negative twisting number with respect to the Hopf fibration so this case does
not happen here (the property of having non negative twisting number is
obviously inherited by finite covers using lifts of isotopies).

3) 
Since we assume that $V$ is not a torus bundle over the circle, all
incompressible tori are isotopic to fibered ones (see e.g. \cite{Hatcher_3D}).

Suppose first that $\xi$ is partitioned by the empty multicurve (i.e. is
transverse to all fibers).  It was proved in \cite{gcs}[Theorem A] that such a
contact structure has negative maximal twisting number. Suppose by contradiction
that it has non vanishing $\pi$--torsion. Up to isotopy of $\xi$ there is an
annulus in the base which is foliated by circles $(C_t)_{t \in [0,1]}$ such
that,
\begin{itemize}
\item 
For all $t$, the torus $T_t$ above $C_t$ in $V$ is prelagrangian.
\item
The directions of the Legendrian foliations of the $T_t$ go all over the
projective line .
\end{itemize}
During this full turn around the projective line, the Legendrian direction meets
the fiber direction and there are Legendrian curve whose contact framing
coincides with the fibration framing so we get a contradiction with the maximal
twisting number estimate. 

We now assume that $\xi$ is partitioned by a non empty multicurve $\Gamma$ and
that no two components of $\Gamma$ are isotopic.  Incompressible fibered tori
correspond to essential curves in the base orbifold $B$. To any such curve $C$
correspond an orbifold covering of $B$ by an open annulus $\hat B$ and the
Seifert fibration lifts to a trivial (smooth) circle fibration $\hat V$. The
lifted contact structure is partitioned by the inverse image of $\Gamma$ which
is made of as many essential circles as there were components of $\Gamma$
isotopic to $C$ (at most $n$) and lines properly embedded in $\hat B$. If there
exist a contact embedding of a toric annulus with its standard torsion 
contact structure in $V$ then it lifts to $\hat V$ inside some $K \times \S^1$
with $K \subset \hat B$ compact. The classification of tight contact structures
on toric annuli forbids torsion higher than $\lfloor \frac{n}{2} \rfloor$
knowing the partition we have over $K$. This argument is not new, it was
explained to me (around 2005) by E~Giroux.

4) The first part is a straightforward extension of \cite{Giroux_2001}[Lemma
4.7]. Suppose now that $\xi_0$ and $\xi_1$ are isotopic. If $V$ is not large
then we are in case (c) of the first point so that $\Gamma_0$ and $\Gamma_1$ are
trivially isotopic. So we now assume that $V$ is large. In particular $\Gamma_0$
and $\Gamma_1$ are essential. By definition, they are isotopic in $B$ if and
only if they are isotopic in the smooth surface $R$ obtained from $B$ by
removing a small open disk around each exceptional point. By definition of
essential curves, no component of $\Gamma_0$ or $\Gamma_1$ is parallel to the
boundary of $R$.  According to W~Thurston, $\Gamma_0$ and $\Gamma_1$ are
isotopic if and only if they have the same geometric intersection number with
all closed curves in $B$ \cite{Thurston_surfaces}[Proposition page 421]. These
geometric intersections number have a contact topology interpretation explained
in \cite{Giroux_2001}[Section 4.E] which proves they are invariant under contact
structures isotopy exactly as in the circle bundle case.
\end{proof}

\section{Contact invariants in sutured Floer homology}

In this section we review sutured Heegaard--Floer homology and
the contact invariants which lives in it.

Heegaard--Floer homology was introduced by P~Ozsváth and Z~Szabó in
\cite{OSz_first} and extended to sutured manifold by A~Juhász in
\cite{Juhasz_def}. In the following we will often silently identify a closed
manifold $M$ with the sutured manifold $(M \setminus B^3, S^1)$ and use sutured
Floer theory (SFH) also in this case.

We denote the universal twisted $\twSFH(-M,\Gamma; \Z[H_2(M ; \Z)])$ by
$\twSFH(-M,\Gamma)$ and, whenever there is no ambiguity on the manifold $M$ we
are considering, we denote $\Z[H_2(M ; \Z)]$ by $\LL$.

According to \cite{GH}[Lemma 10], if a contact invariant vanishes in $\twSFH$
then it vanishes for all coefficients rings.

\begin{thm}[Ozsváth--Szabó, Honda--Kazez--Matić, Ghiggini--Honda--Van Horn Morris]
  \label{thm:ci}
Let $(M, \Gamma)$ be a balanced sutured manifold. To each contact structure
$\xi$ on $(M, \Gamma)$, one can associate a contact invariant which is a set
$c(\xi)$ in $\SFH(-M, -\Gamma)/\pm 1$ and a twisted contact invariant which is a
set $\twc(\xi)$ in $\twSFH(-M, -\Gamma)/\Lx$ satisfying the following
properties:
\begin{enumerate}
  \item	the set $\twc(\xi)$ is invariant under $\partial$--isotopy of $\xi$
  \item if $\xi$ is overtwisted then $\twc(\xi) = 0$
  \item if $\xi$ has non zero torsion then $c(\xi) = 0$
  \item if $M$ is closed and $\xi$ is weakly fillable then $\twc(\xi) \neq 0$
  \item if $M$ is closed and $\xi$ is strongy fillable then $c(\xi) \neq 0$
  \item if $(M',\Gamma')$ is a sutured submanifold of $(M,\Gamma)$ and $\xi$ is a contact
	structure on $(M \setminus M',\Gamma \cup \Gamma')$ then there exists a linear map 
	\[
	\twPhi_{\xi} : \twSFH(-M', -\Gamma') \to \twSFH(-M,-\Gamma)
	\]
	such that, for any contact structure $\xi'$ on $(M', \Gamma')$, one has
	\[
	\twc(\xi \cup \xi') = \Phi_{\xi}(\twc(\xi')).
	\]
	If every connected component of $M \setminus int(M')$ intersect $\partial M$
	then there are analogous maps \over $\Z$ coefficients. They are denoted
	without underlines.
  \item if $(M',\xi')$ is a contact submanifold of $(M, \xi'\cup\xi)$ then 
	$\twc(\xi') = 0$ implies $\twc(\xi \cup \xi') = 0$ and analogously \over $\Z$
	coefficients.
\end{enumerate}
\end{thm}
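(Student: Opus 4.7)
The plan is to treat this as a compilation theorem and produce the invariant together with each property by assembling existing constructions. First I would set up the invariant itself: in the closed case $\twc(\xi)$ is the \OS contact class built from an open book adapted to $\xi$, and in the sutured case one uses the Honda--Kazez--Matić construction via partial open book decompositions, upgraded to $\LL$--coefficients by running the same Heegaard diagram in the universal twisted setup. Item (1), invariance under $\partial$--isotopy, is exactly the well--definedness statement of this construction, established by showing independence of the choice of partial open book up to positive stabilization and of the adapted Heegaard diagram; the twisted version requires only that the intermediate chain--level choices produce the same $\Lx$--orbit, which is standard.

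For the vanishing properties (2) and (3): item (2) is the Ozsváth--Szabó overtwisted vanishing theorem in the closed case, and in the sutured case follows either from a direct argument using an overtwisted disk sitting inside a Darboux ball (making the relevant intersection point a boundary in the diagram) or, more cleanly, by applying the gluing map (6) to the closed case after embedding. Item (3) is the Ghiggini--Honda--Van Horn Morris theorem on Giroux torsion; since the $\Z$--coefficient invariant is obtained from the twisted one by tensoring with the augmentation $\LL \to \Z$, one only needs the $\twc$--level GHV argument produced by the torsion--annulus gluing.

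For the non--vanishing properties (4) and (5): both rely on the Ozsváth--Szabó contact--class computation from a Lefschetz fibration. If $(X, \omega)$ is a weak (resp. strong) symplectic filling of $(M, \xi)$, then $c^+(\xi)$, hence $\twc(\xi)$, is identified with the image of the top generator under the cobordism map induced by $X$; weak fillings require twisted coefficients (coming from the cohomology class of $\omega$), while strong fillings work already over $\Z$. Property (6), the gluing theorem, is the main technical content: it is the Honda--Kazez--Matić gluing map for sutured contact invariants, which assigns to $\xi$ on $M \setminus \mathrm{int}\, M'$ a chain map obtained from a partial open book of $(M \setminus \mathrm{int}\, M', \Gamma \cup \Gamma')$, and whose key property is that stacking contact structures corresponds to composing $\twPhi_\xi$ with $\twc(\xi')$. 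The twisted version is a direct upgrade; the proviso about $\Z$--coefficients being available when every component of $M \setminus \mathrm{int}\, M'$ touches $\partial M$ is the condition ensuring the associated Heegaard diagram for $-M$ remains admissible without extra periodicity.

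Finally, (7) is a formal consequence of (6): if $\twc(\xi') = 0$ in $\twSFH(-M', -\Gamma')$, then $\twc(\xi \cup \xi') = \twPhi_\xi(\twc(\xi')) = 0$. The hardest step in the program is item (6), because one must verify that the partial open book chosen for the complement, the chain--level identifications, and the $\LL$--module structures all cohere — this is genuinely Honda--Kazez--Matić's gluing theorem and I would invoke it rather than reprove it. All remaining obstacles are either definitional checks or straightforward twisted--coefficient upgrades of published arguments.
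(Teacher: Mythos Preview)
Your proposal is correct and matches the paper's own treatment: this theorem is a compilation of results from the literature, and the paper simply gives the relevant citations rather than a proof. Two small corrections on attribution and logic: the twisted version of the gluing map (6) is not just a ``direct upgrade'' of Honda--Kazez--Mati\'c but is due to Ghiggini--Honda; and for (2) in the sutured case the paper's route is to apply (7) to a neighborhood of an overtwisted disk (whose invariant is computed explicitly in Honda--Kazez--Mati\'c), not to embed into a closed manifold, which would push vanishing in the wrong direction.
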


The construction of the contact invariants (and the isotopy invariance) can be
found in \cite{OSz_contact} for the closed case and \cite{HKM_sci} in general.
The fact that it vanishes for overtwisted contact structures was first proved
for the closed case and untwisted coefficients in \cite{OSz_contact} and follow
in general from the last property and the explicit calculation of the twisted
contact invariant of a neighborhood of an overtwisted disk found in
\cite{HKM_sci}. The assertion about torsion was proved in \cite{GHV}. Both
assertions about fillings are consequences of \cite{OSz_genus_bounds}[Theorem
4.2], using the fact that, for strong fillings, the coefficient ring in this
theorem reduces to $\Z$ (see also \cite{Ghiggini_strong_weak}[Theorem 2.13] for
an alternative proof of the strong filling property). The gluing properties are
proved in \cite{HKM_tqft} for untwisted coefficients and extended to twisted
coefficients in \cite{GH}. The gluing maps are unique up to multiplication by an
invertible element of the relevant coefficients ring. Such maps will be called
HKM gluing maps. 

There is one piece of structure of Heegaard--Floer theory which doesn't seem to
have been explicitly discussed\footnote{we don't claim to do anything new in
this paragraph, but we can't find a reference for it} in our context up to now:
the mapping class group action. Any diffeomorphism of a 3--manifold $M$ acts on
any variant of $HF(M)$.  Here we need to be precise about what depends on the
way a Heegaard diagram is embedded inside a manifold and what does not depend on
it.  The usual way to do that is to consider embedded Heegaard diagrams as pairs
made of a self-indexing Morse function with unique minima and maxima and one of
its Morse--Smale pseudo-gradients. Given such a pair $(f, X)$, the Heegaard
surface is $f^{-1}(3/2)$ and the Heegaard circles are the intersections of the
stable or unstable disks of the index 1 and 2 critical points. We denote the
group associated to $(f, X)$ by $HF(f,X)$ (we can use here $\HFhat$,
$\HFp$,\dots).  Let $\varphi$ be a diffeomorphism of $M$. Then
\cite{OSz_triangles}[Theorem 2.1] gives an isomorphism 
\[
\Psi : HF(f,X) \to HF(f \circ \varphi, \varphi^* X)
\]
which is well defined up to sign. But of course the diffeomorphism $\varphi$
also gives an isomorphism between the corresponding abstract Heegaard diagrams
which then gives an isomorphism $\Phi$ between Heegaard--Floer groups. The
action of $\varphi$ on $HF(f, X)$ is defined to be $\Phi^{-1} \circ \Psi$.
It is obvious from the construction that the contact invariant is equivariant
under this action. What is not obvious is that isotopic diffeomorphisms have the
same action so that we get an action of the mapping class group. This has been
checked by P~Ozsváth and A~Stipsicz in the context of knot Floer homology in
\cite{OS_knots}. In this paper we don't use this invariance but use
specific diffeomorphisms. Actually this invariance should never be needed in
contact geometry since we already know that the contact invariant is a contact
structure isotopy invariant so that diffeomorphism isotopy invariance is
automatic on the subgroup spanned by contact invariants in any $\HFhat$ or
$\HFp$.

\section{Contact structures on the three torus}
\label{S:t3}

In this section we prove Theorem \ref{thm:t3} from the introduction.
The following easy lemma is the key algebraic trick.

\begin{lem} 
If an isomorphism $\Phi : \HFhat(\T^3) \to H^1(\T^3) \oplus H^2(\T^3)$ is
$H_1(\T^3)$--equivariant then it conjugates the $\SL_3$ actions of both sides.  
\end{lem}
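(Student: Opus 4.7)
The plan is, for each $\sigma \in \SL_3(\Z)$, to transport $\sigma_*^{\HFhat}$ across $\Phi$ and verify that the result equals $\sigma_*^{H^*}$, up to the sign ambiguity inherent in the mapping class group action on $\HFhat$.

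Fix $\sigma$ and set $\Psi_\sigma := \Phi \circ \sigma_*^{\HFhat} \circ \Phi^{-1}$, an automorphism of $H^1(\T^3) \oplus H^2(\T^3)$. Both $\SL_3$-actions are compatible with their respective $H_1$-module structures in the $\sigma$-twisted sense $\sigma_*(h \cdot x) = \sigma_*(h) \cdot \sigma_*(x)$: on $\HFhat(\T^3)$ because $\sigma_*$ is induced by a diffeomorphism of $\T^3$, and on $H^*(\T^3)$ by naturality of the slant product. Combined with the $H_1$-equivariance of $\Phi$, this gives, by a direct chase of definitions,
\[
\Psi_\sigma(h \cdot x) = \sigma_*(h) \cdot \Psi_\sigma(x), \qquad h \in H_1(\T^3),\ x \in H^1 \oplus H^2.
\]

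It then suffices to prove the following rigidity statement: any automorphism of $H^1(\T^3) \oplus H^2(\T^3)$ with this $\sigma$-twisted equivariance and respecting the natural grading equals $\pm \sigma_*^{H^*}$. Indeed, $H^1$ is intrinsically the $H_1$-annihilator --- since the slant pairing $H_1 \otimes H^2 \to H^1$ is faithful on $H^2$ --- so $\Psi_\sigma$ preserves $H^1$, and by grading preservation it also preserves $H^2$, splitting as $A \oplus C$ with $A \in \operatorname{Aut}(H^1)$ and $C \in \operatorname{Aut}(H^2)$. The $\sigma$-twisted equivariance reads $A \circ \iota_h = \iota_{\sigma_* h} \circ C$ for all $h \in H_1$; by naturality of the slant product $(\sigma_*|_{H^1}, \sigma_*|_{H^2})$ is already such a pair, and the difference of any two solutions is a pair of endomorphisms commuting with every slant. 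A short basis-level calculation identifies these with scalar pairs $(\lambda\,\mathrm{Id}, \lambda\,\mathrm{Id})$; integrality together with $\sigma \in \SL_3(\Z)$ then forces $\lambda = \pm 1$.

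The main obstacle is the grading step: ruling out a potential off-diagonal block $H^2 \to H^1$ in $\Psi_\sigma$ requires that $\Phi$ preserve the Maslov grading on $\HFhat(\T^3)$. This is implicit once one knows the two nontrivial graded pieces of $\HFhat(\T^3)$ have ranks matching those of $H^1$ and $H^2$ respectively, so that an $H_1$-equivariant isomorphism is forced to be graded. Granted this, the remainder of the argument is linear algebra with the slant product.
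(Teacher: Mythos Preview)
Your strategy is essentially the paper's: both arguments reduce to showing that any two automorphisms of $H^1(\T^3)\oplus H^2(\T^3)$ satisfying the same $\sigma$-twisted $H_1$-compatibility must coincide. The paper phrases this as ``two compatible representations agree''; you transport one across $\Phi$ and compare directly. Your scalar-pair computation for the diagonal blocks $(A,C)$ is correct and is equivalent to the paper's separation argument.

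The gap is your claim that an $H_1$-equivariant isomorphism $\Phi$ is forced to be graded. This is false. For any linear map $L\colon H^2\to H^1$, the automorphism $U\colon (a,b)\mapsto (a+L(b),b)$ of $H^1\oplus H^2$ is $H_1$-equivariant, because the $H_1$-action $(a,b)\mapsto(\iota_\gamma b,0)$ annihilates $H^1$ and leaves the $H^2$-coordinate untouched; yet $U$ is not graded when $L\neq 0$. Postcomposing any given $H_1$-equivariant $\Phi$ with such a $U$ shows that the lemma's hypothesis does not pin down a grading, and your $\Psi_\sigma$ can then carry an uncontrolled off-diagonal block $H^2\to H^1$. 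Your rank-matching remark does not help: both graded pieces have rank $3$, and the annihilator argument only yields $\Phi(\HFhat_{-1/2})=H^1$, not $\Phi(\HFhat_{1/2})=H^2$. The paper's proof does not dwell on this point either --- read literally, it applies its separation property to $\rho_i(g)m$ as if these were already known to lie in $H^2$. What both arguments genuinely establish is agreement of the diagonal blocks (up to sign), which is all that is used in the proof of Theorem~\ref{thm:t3}: the contact invariants live in $\HFhat_{-1/2}$, which any $H_1$-equivariant $\Phi$ does send to $H^1$, and on $H^1$ the two $\SL_3$-actions coincide.
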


\begin{proof}
In this proof we drop $\T^3$ from the notations. We denote by $\rho$ the
canonical action of $\SL_3$ on $H_1$.
Let $\rho_1$ and $\rho_2$ be two representations of $SL_3$ on $H^1 \oplus H^2$
which are compatible with the $H_1$ action, that is:
\[
\forall g \in \SL_3, \gamma \in H_1, m \in H^1 \oplus H^2,\quad 
\left(\rho(g)\gamma\right) \rho_i(g)m = \rho_i(g)\left(\gamma m\right).
\]
We want to prove that $\rho_1 = \rho_2$ since this, applied to the standard
action and to the action transported by $\Phi$, will prove the proposition.

We first prove that, for all $g \in \SL_3$, $\rho_1(g)$ and $\rho_2(g)$ agree on
$H^2$. The key property of the $H_1$ action is that it separates all elements of
$H^2$: for all $m \neq m' \in H^2$, there exists $\gamma$ in $H_1$ such that 
$\gamma m = 0$ and $\gamma m' \neq 0$.

Suppose by contradiction that there exists $g \in \SL_3$ and $m \in H^2$ such
that $\rho_1(g)m \neq \rho_2(g)m$. According to the separation property, there
exists $\gamma'$ in $H_1$ such that $\gamma'\rho_1(g)m = 0$ and
$\gamma'\rho_2(g)m \neq 0$. Setting $\gamma = \rho(g)^{-1}(\gamma')$, we get
$\rho(g)\gamma\rho_1(g)m = 0$ and
$\rho(g)\gamma\rho_2(g)m \neq 0$, so $\rho_1(g)(\gamma m) = 0$ and
$\rho_2(g)(\gamma m) \neq 0$, which is absurd since $\rho_1(g)$ and $\rho_2(g)$
are both isomorphisms.

We now prove that the representations agree on $H^1$. For all $m' \in H^1$,
there exists $m \in H^2$ and $\gamma \in H_1$ such that $m'  = \gamma m$. So for
any $g \in SL_3$ and $i = 1,2$, we get 
$\rho_i(g) m' = \rho_i(g) (\gamma m) = \rho(g)\gamma \rho_i(g)m$ and we know
that $\rho_1(g)m = \rho_2(g)m$ thanks to the first part so 
$\rho_1(g)m' = \rho_2(g)m'$.
\end{proof}

\begin{proof}[Proof of Theorem \ref{thm:t3}]
The existence of such an isomorphism is Proposition~8.4 of \cite{OSz_absolute}.
The above lemma proves that, for any $\Phi$ as in the statement and any $x \in
\HFhat$, $x$ and $\Phi(x)$ have the same stabilizer under the action of $\SL_3$.
The uniqueness of $\Phi$ follows since primitive elements of $H^1 \oplus H^2$
are characterized up to sign by their stabilizers. Linearity of $\Phi$ then
guaranties that the sign is common to all elements.

We now prove that the Poincaré dual of the Giroux invariant and the image of
the \OS invariant coincide on torsion free contact structures. First remark
that the \OS invariant belongs to $\HFhat_{-1/2} \simeq H^1$ because the Hopf
invariant of tight contact structures on $\T^3$ is $1/2$.
So both invariants are primitive elements of $H^1$. We prove that the stabilizer
of $G(\xi)$ is contained in that of $c(\xi)$ using equivariance of both
invariants and the fact that $G$ is a total invariant. For any $g$ in $\SL_3$
and $\xi$ a torsion free contact structure, we have
\[
\begin{split}
g G(\xi) = G(\xi) & \iff G(g\xi) = G(\xi) \\
&\iff g\xi \sim \xi\\
&\implies c(g\xi) = c(\xi)\\
&\iff g c(\xi) = c(\xi)
\end{split}
\]
so we have the annouced inclusion of stabilizers and this gives 
$c(\xi) = G(\xi)$.
\end{proof}

\section{The contact TQFT}

We now review the contact TQFT of Honda--Kazez--Matić.
Let $\Sigma$ be a non necessarily connected compact oriented surface with non
empty boundary. Let $F$ be a finite subset of $\partial \Sigma$ whose
intersection with each component of $\partial \Sigma$ is non empty and consists
of an even number of points. We assume that the components of 
$\partial \Sigma \setminus F$ are labelled alternatively by $+$ and $-$. This
labelling will always be implicit in the notation $(\Sigma, F)$. 
The contact TQFT associates to each $(\Sigma, F)$ the graded group
\[
V(\Sigma, F) = SFH(-(\Sigma \times \S^1), -(F \times \S^1))\]
(strictly speaking, one should replace $F$ by a small translate of $F$ along
$\partial \Sigma$ in this formula).

In this construction one can use coefficients in $\Z_2$ or twisted coefficients
(including the trivial twisting which leads to $\Z$ coefficients). We denote by
$\twV(\Sigma, F)$ the version twisted by $\Z[H_2(\Sigma \times \S^1)]$.

\begin{prop}
  \label{prop:twV}
Let $(\Sigma, F)$ be a surface with marked boundary points as above and $\M$ be
any coefficient module for the sutured manifold 
$(\Sigma \times \S^1, F \times \S^1)$. We have, for any coherent orientations
system:
\[
\twV(\Sigma, F; \M)
\simeq (\M_{(-1)} \oplus \M_{(1)})^{\otimes(\# F/2 - \chi(\Sigma))}.
\]
The subscripts $(-1)$ and $(1)$ refer to the grading.
\end{prop}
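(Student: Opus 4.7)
My plan is to compute $\twV(\Sigma,F;\M)$ directly from an explicit admissible Heegaard diagram for the sutured manifold $(-(\Sigma\times\S^1),-(F\times\S^1))$, chosen so that the twisted chain complex is already a bigraded free $\M$-module of the stated form and such that no holomorphic disk contributes to the differential, making the homology equal to the chain module itself.

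First I choose a maximal collection of pairwise disjoint properly embedded arcs $a_1,\dots,a_n$ in $\Sigma$ whose endpoints lie in the interiors of boundary arcs of the same sign (say, the positive ones) and whose complement in $\Sigma$ is a disjoint union of bigons, each containing exactly one positive and one negative sub-arc of $\partial\Sigma\setminus F$. A short Euler characteristic count shows that precisely $n=\#F/2-\chi(\Sigma)$ such arcs are needed, and the decomposition always exists. Using the arcs $a_i$ I build a Heegaard diagram of product type: the $\alpha$-curves and the $\beta$-curves are obtained by doubling the $a_i$ and their small Hamiltonian pushoffs across $\Sigma$, and basepoints are placed one in each negative-region bigon on each side of the Heegaard surface.

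With this setup each pair $(\alpha_i,\beta_i)$ intersects in exactly two points whose relative Maslov gradings differ by $2$, so the chain module splits as a free bigraded $\M$-module isomorphic to $(\M_{(-1)}\oplus\M_{(1)})^{\otimes n}$. Because every topological bigon between two distinct generators has to sweep across at least one basepoint in a negative-region bigon (by the way the pushoffs were arranged), no holomorphic disk can contribute to the differential, either over $\Z$ or over the universally twisted ring $\Lx=\Z[H_2(\Sigma\times\S^1)]$. The twisted homology therefore coincides with the chain module, yielding the proposition.

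The main technical obstacle I anticipate is verifying weak admissibility with respect to $\Lx$: every non-trivial periodic domain must carry multiplicities of both signs. Under the product structure, periodic domains correspond bijectively to tori of the form $\gamma\times\S^1$ for $1$-cycles $\gamma$ in $\Sigma$, and balancing their multiplicities amounts to a combinatorial condition on the arcs $a_i$ with respect to a chosen basis of $H_1(\Sigma)$. Once this is arranged, tracking the induced $\Lx$-action on the generators is routine linear algebra and confirms that $\twV(\Sigma,F;\M)$ splits as the external tensor product claimed, with the indicated grading contribution from each arc.
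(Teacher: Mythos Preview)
Your high-level plan---produce an explicit admissible diagram with $n=\#F/2-\chi(\Sigma)$ curve pairs each meeting in two points, then argue the differential vanishes---is exactly the paper's plan, but the diagram you describe is not a sutured Heegaard diagram for $(\Sigma\times\S^1,F\times\S^1)$. The construction you sketch (arcs $a_i$ landing on positive boundary arcs, $\alpha$- and $\beta$-curves obtained as doubles of the $a_i$ and of Hamiltonian pushoffs, \emph{basepoints} placed in bigons) is the standard picture for the product sutured manifold $\Sigma\times[0,1]$, not for $\Sigma\times\S^1$. In sutured Floer homology there are no basepoints: their role is played by the regions touching $\partial S$, and the Heegaard surface must satisfy $\partial S=\Gamma$, here a union of $\#F$ vertical circles; a doubled copy of $\Sigma$ is closed and has no such boundary. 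Moreover $R_\pm$ for this sutured manifold are unions of vertical annuli in $\partial\Sigma\times\S^1$, not copies of $\Sigma$, so the compression-body condition cannot be met by your picture. The mechanism ``bigons must cross a basepoint'' is therefore unavailable, and with it your argument that $\partial=0$.

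The paper builds the Heegaard surface quite differently: start from $R_+$ (vertical annuli), attach a tube around each horizontal arc $a_i\times\{\theta_0\}$, and push into the interior. The $\alpha_i$ are meridians of the tubes; each $\beta_i$ runs along half a tube and closes up inside $R_+$ so that $\beta_i$ together with that half-tube is isotopic to a fibered vertical annulus. One then gets $|\alpha_i\cap\beta_j|=2\delta_{ij}$, admissibility follows because a basis of $H_2$ is realised by disjoint periodic domains each carrying both signs, and the differential vanishes because every non-boundary region is a rectangle or an annulus and Lipshitz's index formula rules out rigid disks. Your arc system $a_1,\dots,a_n$ and the count $n=\#F/2-\chi(\Sigma)$ are precisely the input this construction needs, so the fix is to replace the doubling-with-basepoints model by the ``$R_+$ plus tubes'' model and then run your admissibility and index arguments there.
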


\begin{proof}
The analogous statement \over $\Z$ coefficients was proved in \cite{HKM_tqft}
using product annuli decomposition, \cite{Juhasz_decat}[Proposition 7.13]. This
technology is not yet available \over twisted coefficients but one can actually
draw explicit admissible sutured Heegaard diagrams with vanishing differential
for these sutured manifolds. We will sketch how to construct them and draw
pictures for the three cases where we actually use this computation below.

We first recall what is an (embedded) Heegaard diagram for a (balanced
connected) sutured manifold $(V, \Gamma)$. It consists of a surface $S$ properly
embedded in $V$ and circles $\alpha_1, \beta_1,\dots, \alpha_k, \beta_k$ in $S$
such that:
\begin{itemize}
\item  
$\partial V = \Gamma$

\item 
if we denote by $V_+$ the connected component of $V \setminus S$ containing
$R_+$, there exist open disks properly embedded in $V_+$, called compression
disks, bounded by the $\alpha$ circles and such that $V_+ \cup R_+$ retracts by
deformation on $R_+$

\item 
the analogous statement holds for $V_-$ and $R_-$ with the $\beta$ circles.
\end{itemize}

We now return to the proposition.
Let $g$ be the genus of $\Sigma$, $r$ the number of boundary components and $n =
\# F/2$. The sutured manifold we study will be denoted by $(V, \Gamma)$ for
concision.  We rule out the trivial $(g = 0, r = 1, n = 1)$ case from this
discussion as it needs (easy) special treatment. 
Assume first that $r = 1$ and $n = 1$. Let $a_1,\dots, a_{2g}$ be a
system of disjoints arcs properly embedded in $\Sigma$ which cuts $\Sigma$ to a
disk.  Let $P_1, \dots, P_{2g}$ be tubes around the arcs $a_i \times \{ \theta_0
\}$ for some fixed $\theta_0 \in \S^1$. We can assume that each $P_i$ meets the
boundary of $V$ in its positive part $R_+$. Let $S'$ be the union of $R_+$ and
the tubes $P_i$. The surface $S$ obtained by pushing $S'$ to make it properly
embedded in $V$ is a Heegaard surface for $(V, \Gamma)$. 

Each tube $P_i$ naturally bounds a regular neighborhood $D \times [-1, 1]$ of the arc
$a_i$. Let $\alpha_i$ be the boundary of $D \times \{0\}$ in each $P_i$. Let
$\beta_i$ be the union of $\{ \pm 1\} \times [-1, 1]$ and two arcs in $R_+$ so
that $\beta_i$ and half of $P_i$ becomes isotopic to a fibered annulus in $V$.
See figure \ref{fig:hs_punct_torus} for the case $(g = 1, r = 1, n = 1)$.
\begin{figure}[ht]
	\begin{center}
		\includegraphics[width=8cm]{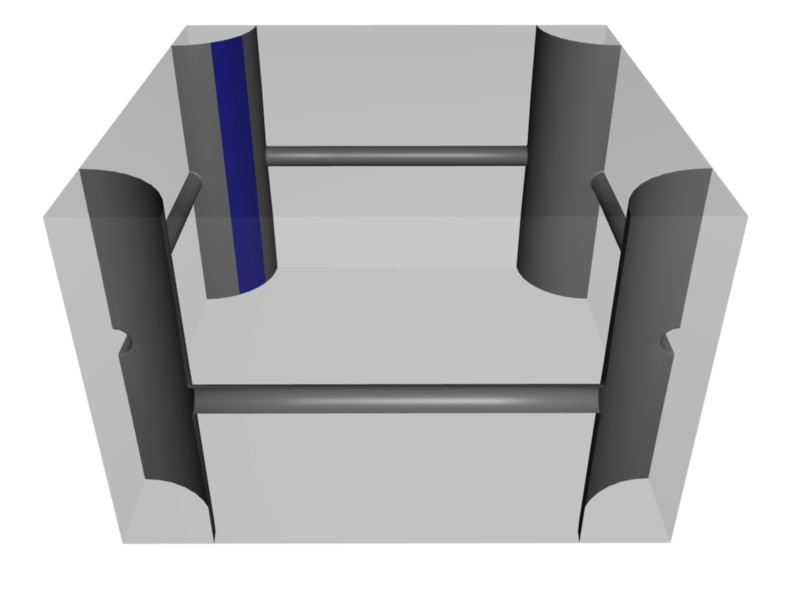}
	\end{center}
	\caption{Heegaard surface for $V = (\T^2 \setminus D^2) \times \S^1$ with two
	vertical sutures. Everything lives inside a cube minus a neighborhood of its
	vertical edges. The cube's faces are pairwise glued to get $(\T^2 \setminus
	D^2) \times \S^1$. The picture show the (almost) Heegaard surface $S'$ of the
	proof. The blue annulus in the back left is the negative part $R_-$ of the
	boundary of $V$. The $\beta$ compression disks are inside the front, back,
	left and right faces of the cube (which get glued to two annuli in $V$).}
	\label{fig:hs_punct_torus}
\end{figure}
We then have a Heegaard diagram $(S, \alpha, \beta)$ for $(V, \Gamma)$. We now
explain what happens when we add some extra boundary components (i.e. $r > 1$).
For each extra component $T_j$ we add two tubes $P_{2g + j}$ and $P'_{2g +j}$
around horizontal arcs $a_{2g+j} \times \{\theta_0\}$ and $a'_{2g+j} \times
\{\theta_0\}$. We choose these arcs so that they can be completed by arcs in the
positive part of $\partial \Sigma$ to get a circle isotopic to the new boundary
component. See figure \ref{fig:hs_annulus} for the case $(g = 0, r = 2, n = 2)$
where the extra boundary component is the front one. 
\begin{figure}[ht]
	\begin{center}
		\includegraphics[width=8cm]{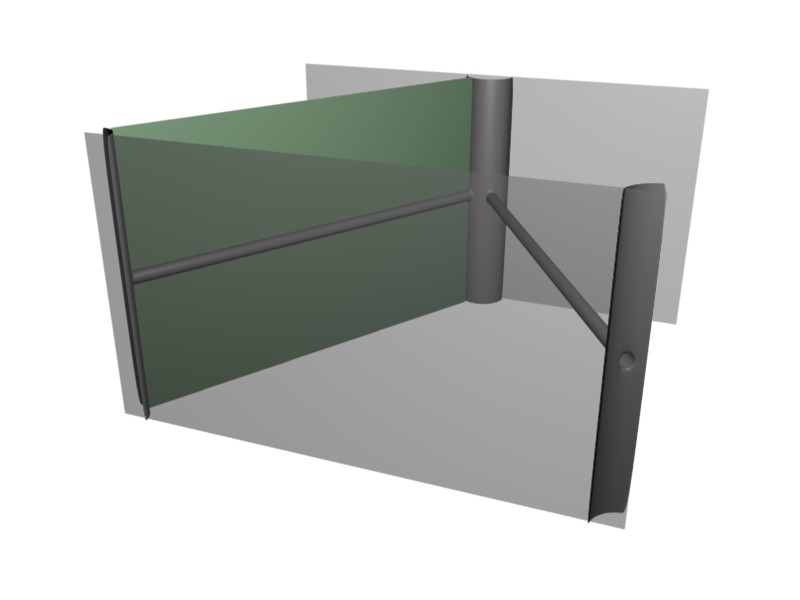}
	\end{center}
	\caption{Heegaard surface for $V = (I\times\S^1) \times \S^1$ with two
	vertical sutures. Top and bottom are glued. Left and right are glued. The boundary of $V$ is the
	union of the transparent tori (drawn as rectangles). The Heegaard surface is the union of
	two vertical annuli and two horizontal tubes. A compression disk bounded by a
	$\beta$ curve is shown in green. }
	\label{fig:hs_annulus}
\end{figure}
We add circles $\alpha_{2g+j}$, $\alpha'_{2g+j}$, $\beta_{2g+j}$ and
$\beta'_{2g+j}$ to the diagram as above. When there are extra marked points on
the boundary (i.e. $n > r$), we add one tube $P_{2g + r - 1 + k}$ between two
positive parts of the relevant boundary component. We add the corresponding
circles to the diagram. See figure \ref{fig:hs_bypass} for the case 
$(g = 0, r = 1, n = 3)$ where the extra sutures are the front ones.
\begin{figure}[ht]
	\begin{center}
		\includegraphics[width=8cm]{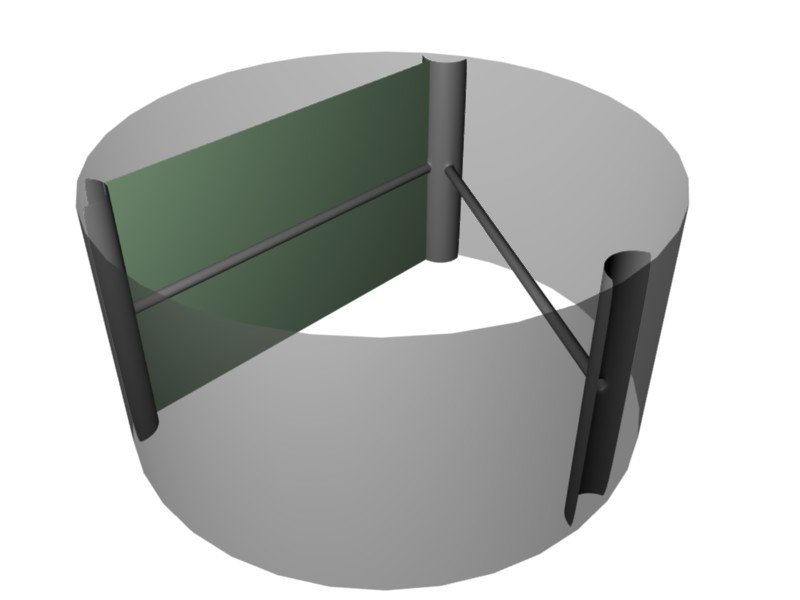}
	\end{center}
	\caption{Heegaard surface for $V = D^2 \times \S^1$ with three
	vertical sutures. Top and bottom are glued. The boundary of $V$ is the
	transparent torus (drawn as an annulus). The Heegaard surface is the union of
	two vertical annuli and two horizontal tubes. A compression disk bounded by a
	$\beta$ curve is shown in green. }
	\label{fig:hs_bypass}
\end{figure}
In this paragraph, whenever we started from the trivial case 
$(g = 0, r = 1, n = 1)$ which was ruled out above, we can use as a starting
point the degenerate diagram with Heegaard surface $R_+$ and no circle.

The constructed diagrams have $2g + 2(r-1) + (n - r)$ circles of each type and 
$\# \alpha_i \cap \beta_j = 2 \delta_{ij}$. Hence the chain complex has rank
$2^{n - \chi(\Sigma)}$. So the proposition follows from the admissibility of
these diagrams and the vanishing of the associated differentials.

Each arc $a_i$, $1 \leq i \leq 2g$ can be extend to a loop $\bar a_i$ and each
pair of arcs corresponding to extra boundary components can be extended to a
loop $l_j$, $1 \leq r - 1$ such that the collection of tori $\bar a_i \times
\S^1$ and $l_j \times \S^1$ gives a basis of $H_2(V, \Z)$.  This basis can be
realized by periodic domains using the $\alpha$ and $\beta$ circles associated
to the corresponding arcs.  So we have a basis of $H_2(V, \Z)$ associated to
disjoint periodic domains, each having both positive and negative coefficients.
Since they have disjoint support, any linear combination of these domains will
be admissible and the diagram is admissible. 

To compute the differential we note that each region of the complement of the
circles in $S$ which is not the base region is either a rectangle or
an annulus. In addition each rectangle is adjacent to either a rectangle using
the same circles or to the base region or to an annulus. One can then use
Lipshitz's formula to prove that the Heegaard--Floer differential vanishes.
\end{proof}

A dividing set for $(\Sigma, F)$ is a multi-curve $K$ in $\Sigma$ (see
Definition \ref{def:partition}). The complement of a dividing set in $\Sigma$
splits into two (non connected) surfaces $R_\pm$ according to the sign of their
intersection with $\partial \Sigma$. The graduation of a dividing set is defined
to be the difference of Euler characteristics $\chi(R_+) - \chi(R_-)$.

A dividing set $K$ is said to be isolating if there a
connected component of the complement of $K$ which does not intersect the
boundary of $\Sigma$.

To each dividing set $K$ for $(\Sigma, F)$ is associated the contact invariant
of the contact structures partitioned by $K$. All such contact structures are
either isotopic according to Theorem \ref{thm:classif_partition} or overtwisted
so they have the same invariant. These invariants belong to the graded part
given by the graduation of $K$.

\begin{thm}[\cite{HKM_tqft}]
Over $\Z_2$ coefficients, the following are equivalent:
\begin{enumerate}
  \item $c(K) \neq 0$
  \item $c(K)$ is primitive
  \item $K$ is non isolating
\end{enumerate}
Over $\Z$ coefficients, (3) $\implies$ (2) $\implies$ (1).
\end{thm}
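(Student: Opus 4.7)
The plan is to extract the substantive content of the theorem. The equivalence $(1) \iff (2)$ over $\Z_2$ is tautological because $\Z_2$ is a field, and $(2) \implies (1)$ over $\Z$ is tautological because primitive elements are by definition non-zero. The real work therefore reduces to proving $(3) \implies (2)$ over $\Z$ (from which the $\Z_2$ statement $(3) \implies (1)$ follows by reduction of coefficients) together with $(1) \implies (3)$ over $\Z_2$.

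For $(3) \implies (2)$ I would compute $c(K)$ on an admissible Heegaard diagram for $(\Sigma \times \S^1, F \times \S^1)$ adapted to the dividing set $K$, extending the construction in the proof of Proposition \ref{prop:twV}. Non-isolation is precisely the property one needs in order to choose the system of compression arcs $a_1,\ldots,a_k$ of $\Sigma$ so that each arc connects two boundary points, is transverse to $K$, and meets $K$ in a controlled pattern. The resulting Heegaard diagram has $2^{\# F/2-\chi(\Sigma)}$ generators, its differential vanishes (as in the proof of Proposition \ref{prop:twV}), and the contact invariant $c(K)$ is identified with exactly one of those generators, namely the one whose intersection point on each pair $\alpha_i \cap \beta_i$ records which side of $a_i$ is labelled $+$ by $K$. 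As a single generator of a free complex with vanishing differential, $c(K)$ represents a primitive homology class.

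For $(1) \implies (3)$ over $\Z_2$ I would argue the contrapositive by induction on a complexity measure of the isolated component $R_0 \subset \Sigma \setminus K$, using the bypass exact triangle of \cite{HKM_tqft}. The idea is to choose a properly embedded bypass arc $\delta \subset R_0$ with endpoints on $\partial R_0 \subset K$ such that one of the two bypass-modified dividing sets, say $K_1$, produces an overtwisted $\S^1$-invariant contact structure (so $c(K_1) = 0$ by Theorem \ref{thm:ci}), while the other, $K_2$, remains isolating but strictly simpler. The bypass triangle then gives $c(K) = c(K_1) + c(K_2) = c(K_2)$ in $\Z_2$ coefficients, and the induction terminates at a base case (for example $R_0$ a disk bounded by a single component of $K$) in which both bypass alternatives are overtwisted.

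The hardest step is the inductive descent in $(1) \implies (3)$: one must verify, case by case on the topology of $R_0$ and on the local pattern of $K$ along $\partial R_0$, that a bypass arc $\delta$ with the required properties always exists and that the chosen complexity measure strictly decreases. The restriction to $\Z_2$ is essential, because over $\Z$ the bypass triangle reads $c(K) = \pm c(K_1) \pm c(K_2)$ with signs that the local bypass picture does not control, so the cancellation leading to $c(K) = c(K_2)$ is unavailable outside characteristic two; this is why the theorem only claims the reverse implications over $\Z_2$.
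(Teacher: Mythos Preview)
This theorem is not proved in the paper at all: it is stated with the attribution \cite{HKM_tqft} and no proof is given. There is therefore no ``paper's own proof'' to compare against. What the paper \emph{does} prove is the strengthening to $\Z$ coefficients of the missing implication (Theorem~\ref{thm:tqft}: isolating $\implies c(K)=0$ over $\Z$), and the paper remarks that this proof is ``almost identical to that of \cite{HKM_tqft}[Proposition 7.12]'', so one can infer the shape of the original HKM argument from Section~\ref{S:vanishing}.

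Your sketch of $(3) \Rightarrow (2)$ is in line with how HKM actually proceed (explicit diagram, contact class realized as a generator), and is compatible with the diagram construction in Proposition~\ref{prop:twV}.

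Your sketch of the contrapositive of $(1) \Rightarrow (3)$ has the right overall mechanism (bypass relation plus induction on a complexity of the isolated region) but the specific inductive scheme you describe is not the one used. You propose arranging each bypass so that one modified dividing set is \emph{overtwisted} and the other is isolating but simpler, giving $c(K)=0+c(K_2)$. In general one cannot force one of the two bypass modifications to be overtwisted; the actual argument (both HKM's $\Z_2$ version and the paper's $\Z$ version in the proof of Theorem~\ref{thm:tqft}) runs a two-parameter induction on the genus and the number of boundary components of the isolated region, and chooses bypasses so that \emph{both} modified dividing sets are again isolating with strictly smaller complexity. The base cases are: an isolated disk (overtwisted), an isolated annulus (positive Giroux torsion), and --- crucially --- an isolated once-punctured torus, which is handled separately (Proposition~\ref{prop:main} here; an ad hoc $\Z_2$ argument in HKM). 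Your scheme would stall precisely at this genus-one base case, where no single bypass inside the isolated region produces an overtwisted configuration on one side while lowering complexity on the other. Also note that a bypass arc must meet $K$ in three points, so an arc $\delta$ lying entirely in the isolated region $R_0$ with endpoints on $\partial R_0$ is not a bypass arc; the arcs used in the induction necessarily exit $R_0$.
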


Conjecture 7.13 of \cite{HKM_tqft} states that the assertions in this theorem
are equivalent \over $\Z$ coefficients. What remains to be proved is that
isolating dividing sets have vanishing invariant. This (and more) will be proved
in Section \ref{S:vanishing}.

\section{Vanishing results}

\label{S:vanishing}

In this section we prove the main theorem from the introduction and the
following theorem which finishes off the proof of Conjecture 7.13 of
\cite{HKM_tqft}. We use the definitions and notations of the previous section.

\begin{thm}
  \label{thm:tqft}
  If $K$ is isolating then $c(K)=0$ \over $\Z$--coefficients.
\end{thm}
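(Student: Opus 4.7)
The plan is to relate $c(K)$ to the contact invariant of nearby dividing sets with known vanishing invariants via a bypass exact triangle in the Honda--Kazez--Matić contact TQFT. The isolating hypothesis is used to construct a bypass attachment whose two resolutions both have zero invariant: one because it contains genuine $2\pi$-Giroux torsion, the other because it is overtwisted.

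Step 1 (constructing the bypass). Use the isolating hypothesis to fix a connected component $R_0$ of $\Sigma\setminus K$ disjoint from $\partial\Sigma$. Pick a boundary component $c\subset\partial R_0\subset K$ and let $R_1$ be the component of $\Sigma\setminus K$ adjacent to $R_0$ across $c$. Because $R_0$ is shielded from $\partial\Sigma$, there is room in a fibered collar of $c$ on the $R_1$-side to embed a short bypass attaching arc $\alpha$ crossing $c$ transversally. The two bypass resolutions produce two new dividing sets: $K^\sharp$, obtained by adding to $K$ a parallel push-off $c'$ of $c$ into $R_1$, and $K^\flat$, obtained by attaching the opposite bypass (which introduces a small contractible component into the dividing set).

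Step 2 (vanishing at the two resolutions). The set $K^\sharp$ has two parallel closed components $c,c'$, so by Theorem~\ref{thm:classif_partition}(3) the partitioned contact structure $\xi_{K^\sharp}$ realises genuine $2\pi$-Giroux torsion in the fibered annulus between them; property~(3) of Theorem~\ref{thm:ci} gives $c(K^\sharp)=0$ over $\Z$. The set $K^\flat$ is no longer essential, so its partitioned structure is overtwisted by Theorem~\ref{thm:classif_partition}(1), and property~(2) of Theorem~\ref{thm:ci} gives $c(K^\flat)=0$.

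Step 3 (concluding via the bypass triangle). The HKM contact TQFT supplies a bypass exact triangle
\[
V(\Sigma,K)\xrightarrow{\Phi^\sharp}V(\Sigma,K^\sharp)\xrightarrow{\Phi^\flat}V(\Sigma,K^\flat)\xrightarrow{\Phi}V(\Sigma,K)[-1]
\]
whose maps send contact invariants to contact invariants (up to sign). By construction $\Phi^\sharp(c(K))=c(K^\sharp)=0$, so $c(K)\in\ker\Phi^\sharp=\operatorname{im}\Phi$. To conclude $c(K)=0$ one then works at the chain level using the explicit admissible Heegaard diagrams of Proposition~\ref{prop:twV}: because $c(K^\flat)=0$, any chain representing a preimage under $\Phi$ of $c(K)$ must itself be a boundary in the $V(\Sigma,K^\flat)$-complex, and direct inspection of generators in the diagrams forces $c(K)=0$ over $\Z$-coefficients. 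Property~(7) of Theorem~\ref{thm:ci} applied to the fibered neighbourhood of $\overline{R_0}\cup(c\text{-collar})\cup c'$ gives an alternative route, propagating the vanishing from the local $2\pi$-torsion block created by the push-off directly to $c(K)$.

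Main obstacle. The hard part is Step~3: an abstract exact triangle with two vanishing consecutive invariants does not by itself force the third to vanish, so one must extract chain-level information from Proposition~\ref{prop:twV} and manage signs carefully. This sign management is precisely what distinguishes the $\Z$-coefficient statement from the $\Z_2$-coefficient version already proved in \cite{HKM_tqft}. Crucially, the isolating hypothesis enters in Step~1 to guarantee that the bypass arc can be built in such a way that $K^\sharp$ produces genuine $2\pi$-torsion; without isolation, the best one can produce is $\pi$-torsion, which kills only twisted invariants and leaves $\Z$-coefficient invariants untouched.
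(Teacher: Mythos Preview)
Your Step~3 rests on a misunderstanding of the bypass machinery in this setting. The group $V(\Sigma,F)$ depends only on $(\Sigma,F)$, not on $K$, so there is no exact triangle of the form you wrote; all three contact invariants live in the \emph{same} module. What the paper actually provides (Lemma~\ref{lem:bypass}) is a \emph{linear relation} $\tilde c_0 = a\,\tilde c_1 + b\,\tilde c_2$ with $a,b$ invertible. In particular, if two of the three invariants vanish then the third does instantly---Step~3 would be a one-liner if Steps~1--2 were correct, and no ``chain-level inspection'' or sign management is needed at that stage.

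The real gap is in Steps~1--2. A bypass is a local move on a disk meeting $K$ in three arcs; it does not ``add a parallel push-off $c'$ of $c$'' to the global dividing set in the way you describe. Concretely, take the base case where the isolated region is a once-punctured torus, so $K$ is the dividing set $K_0$ of Figure~\ref{fig:K_0}. The natural bypass there (dashed circle in that figure) relates $K_0$ to the dividing sets $K_1$ and $K_x$ of Figure~\ref{fig:div_annulus}. Neither of these is overtwisted and neither carries $2\pi$--torsion; both have \emph{nonvanishing} invariant over $\Z$, since the associated contact structures embed in Stein fillable manifolds. The whole difficulty of the theorem is precisely to show that these two nonzero terms cancel, and that is the content of Proposition~\ref{prop:main} (which in turn uses Proposition~\ref{prop:annulus} and a gluing to $\T^3$). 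Your ``alternative route'' via property~(7) of Theorem~\ref{thm:ci} does not help either: the parallel copy $c'$ and the putative torsion block would live in $K^\sharp$, not in $K$, so there is nothing inside $(\Sigma\times\S^1,\xi_K)$ itself from which to propagate vanishing. The paper's inductive scheme (on the number of boundary components of the isolated region, then on its genus) is designed exactly to funnel everything down to this punctured-torus computation, which your outline skips.
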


Note that the analogous statement \over twisted coefficients is known to be
false. For instance if we consider on $\T^3$ a contact structure partitioned by
four essential circles and remove a small disk meeting one of these circles
along an arc then we get an isolating dividing set on a punctured torus whose
twisted invariant is sent to a non vanishing invariant according to Theorem
\ref{thm:ci} since the corresponding contact structures on $\T^3$ are weakly
fillable.

\begin{defn}
  \label{def:bypass}
We say that dividing sets $K_0$, $K_1$ and $K_2$ are bypass-related if they
coincide outside a disk $D$ where they consists of the dividing sets of
Figure \ref{fig:bypass}.
\end{defn}

\begin{figure}[ht]
  \begin{center}
  	\includegraphics{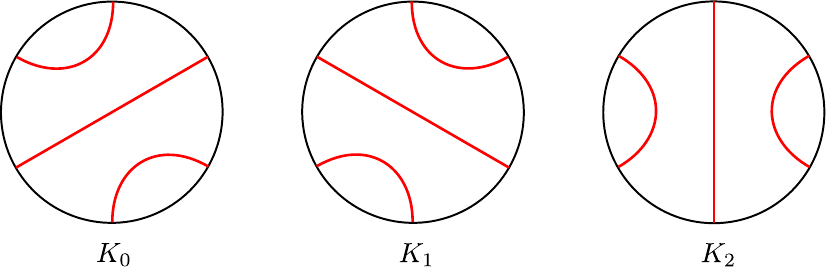}
  \end{center}
  \caption{Bypass relation}
  \label{fig:bypass}
\end{figure}

The following lemma is essentially proved in \cite{HKM_tqft} in the combination
of proofs of Lemma 7.4 and Theorem 7.6. We write a proof here to explain why
twisted coefficients come for free.

\begin{lem}
  \label{lem:bypass}
If $K_0$, $K_1$ and $K_2$ are bypass-related then, for any representatives
$\tilde \twc_i \in \twc(K_i)$, there exist 
$a, b \in \Lx$
such that $\tilde \twc_0 = a \tilde \twc_1 + b \tilde \twc_2$.
The same holds \over $\Z$ coefficients.
\end{lem}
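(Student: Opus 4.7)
The plan is to localize the relation to the bypass disk by gluing, and then verify it by an explicit calculation on the rank-four model underlying the proof of Proposition~\ref{prop:twV}. Let $D$ be the bypass disk, $\Sigma'$ the closure of $\Sigma \setminus D$, and $K'$ the common restriction of the three $K_i$ to $\Sigma'$; each $K_i$ decomposes as $K' \cup K_i|_D$, with $K_i|_D$ meeting $\partial D$ in the same $6$--point set $F_D$. The contact structure $\xi'$ on $(\Sigma' \times \S^1, K' \times \S^1)$ partitioned by $K'$ is common to the three $K_i$. By property~(6) of Theorem~\ref{thm:ci} extended to twisted coefficients in~\cite{GH}, it gives an $\Lx$--linear gluing map
\[
\twPhi_{\xi'} \co \twV(D, F_D) \to \twSFH(-\Sigma \times \S^1, -F \times \S^1),
\]
well defined up to a unit of $\Lx$, satisfying $\twc(K_i) = \twPhi_{\xi'}(\twc_i)$, where $\twc_i$ is the local invariant of $(D \times \S^1, F_D \times \S^1)$ partitioned by $K_i|_D$. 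It therefore suffices to establish the relation in $\twV(D, F_D)$.

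Second, I would compute $\twV(D, F_D)$ explicitly. Since $H_2(D \times \S^1) = 0$, the local twisted coefficient ring is just $\Z$ and the twisted and untwisted local computations coincide. The admissible Heegaard diagram drawn in Figure~\ref{fig:hs_bypass} has exactly four $\alpha \cap \beta$ intersection points and vanishing differential, by the last paragraph of the proof of Proposition~\ref{prop:twV}. So $\twV(D, F_D)$ is free of rank four on these generators, and the natural bigrading splits it into four rank-one pieces. Using the convex Heegaard construction of \cite{HKM_sci}, each local contact invariant $\twc_i$ is realized as a single intersection point of the diagram, determined by how $K_i|_D$ meets the cutting arcs of the proof. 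Since bypass attachment preserves $\chi(R_+) - \chi(R_-)$ (and the finer grading), the three invariants $\twc_0, \twc_1, \twc_2$ land in a common graded summand, which is of rank two.

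Third, a direct inspection of Figure~\ref{fig:hs_bypass} shows that the three intersection points representing $\twc_0, \twc_1, \twc_2$ are three of the four corners of a small rectangle in this rank-two summand, and any two of them form a basis of that summand over $\Z$. The third is then their sum up to sign, giving $\tilde\twc_0 = \pm\tilde\twc_1 \pm \tilde\twc_2$ in the local module. Applying $\twPhi_{\xi'}$ yields the desired relation on $\Sigma \times \S^1$, with the ambiguity in the choice of representatives $\tilde\twc_i$ absorbed into units $a, b \in \Lx$; the same argument with the $\Z$--coefficient gluing map gives the untwisted statement. The main obstacle is the rigorous identification of each $\twc_i$ with the expected diagram generator in step two; this is essentially the combined content of \cite{HKM_tqft}[Lemma~7.4 and Theorem~7.6]. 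The reason twisted coefficients ``come for free'' is precisely that the local piece $D \times \S^1$ has trivial $H_2$, so there are no periodic domains on the local Heegaard diagram to produce nontrivial $\Lx$--valued coefficients in the bypass relation.
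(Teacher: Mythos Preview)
Your overall strategy---localize to the disk $D$, note that $H_2(D \times \S^1)=0$ so the local computation is over $\Z$, establish the relation there, then push it to $\twV(\Sigma,F)$ via the HKM gluing map $\twPhi_{\xi'}$---matches the paper exactly, including the explanation of why twisted coefficients come for free. The divergence is in how you establish the local relation on the disk.

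The paper does \emph{not} identify the $\tilde c_i^D$ with specific Heegaard generators. Instead, from the fact that the three classes sit in a rank-two graded piece it gets a nontrivial relation $\lambda\tilde c_0^D=\mu\tilde c_1^D+\nu\tilde c_2^D$, and then pins down the coefficients by applying three further HKM gluing maps $\Phi_1,\Phi_2,\Phi_3$ (attaching a boundary-parallel arc between adjacent points of $F_D$). Each $\Phi_j$ sends two of the $c_i^D$ to the nonzero primitive classes $c_\pm$ and the third to zero, forcing $\lambda=\pm\mu=\pm\nu$, all nonzero. This is a purely TQFT argument: it uses only the gluing property and the nonvanishing of $c_\pm$ in a torsion-free group, never touching a Heegaard diagram.

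Your route---identifying each $\tilde c_i^D$ with a specific generator of the diagram in Figure~\ref{fig:hs_bypass} and reading off the relation---is in principle viable but you have not carried it out; ``direct inspection'' here hides the nontrivial step of matching the HKM partial-open-book diagram for each $K_i|_D$ with the fixed diagram of Proposition~\ref{prop:twV}. There is also a small inconsistency: you first say the bigrading splits $\twV(D,F_D)$ into four rank-one pieces, then say the three invariants lie in a common rank-two summand. The relevant (single) grading by $\chi(R_+)-\chi(R_-)$ gives pieces of ranks $1,2,1$, and the bypass classes live in the middle piece; that is all the paper uses. The paper's auxiliary-gluing argument is worth learning: it sidesteps exactly the generator-identification obstacle you flag as the main one.
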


\begin{proof}
The first part of the proof concentrate on the disk where the dividing sets
differ. Let $\tilde c_i^D$ be representatives of the contact invariants of the
three dividing sets on a disk $D$ involved in Definition \ref{def:bypass}. Note
that $H_2(D \times \S^1)$ is trivial so we now work \over $\Z$ coefficients and
suppress the underlines.

Because the $c_i^D$'s all belong to the same rank 2 summand of $V(D, F_D)$ there
are integers $\lambda$, $\mu$ and $\nu$ not all zero such that 
\begin{equation}
  \label{eqn:lmn}
\lambda \tilde c_0^D = \mu \tilde c_1^D + \nu \tilde c_2^D. 
\end{equation}

We denote by $K_\pm$ the dividing sets of Figure \ref{fig:K_plus_minus} and by
$c_\pm$ their contact invariants.
\begin{figure}[ht]
  \begin{center}
  	\includegraphics{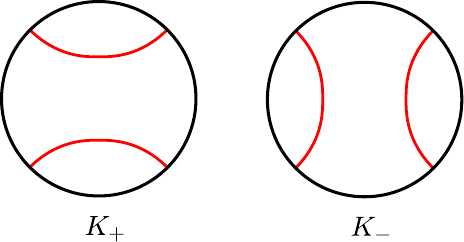}
  \end{center}
  \caption{Dividing sets used to prove Lemma \ref{lem:bypass}}
  \label{fig:K_plus_minus}
\end{figure}

Label the points of $F_D$ clockwise by $1,\dots, 6$ starting with the upper
right point. Let $\Phi_j$, $j = 1, 2, 3$, denote a HKM gluing map obtained by
attaching a boundary parallel arc between points $j$ and $j+1$.  The gluing maps have
the following effects:
\begin{align}
  \label{eqn:phi1}  
  \Phi_1 : c_0^D \mapsto c_+, \quad c_0^D \mapsto c_+, \quad c_0^D \mapsto 0 \\ 
  \label{eqn:phi2}  
  \Phi_2 : c_0^D \mapsto 0, \quad c_0^D \mapsto c_-, \quad c_0^D \mapsto c_- \\ 
  \label{eqn:phi3}  
  \Phi_3 : c_0^D \mapsto c_+, \quad c_0^D \mapsto 0, \quad c_0^D \mapsto c_+ 
\end{align}
Using these equations and the facts that $c_\pm$ are non zero in a torsion
free group (see Proposition \ref{prop:twV}), we get
\begin{gather*}
(\ref{eqn:phi1}) \implies \lambda = \pm \mu\\
(\ref{eqn:phi2}) \implies \mu = \pm \nu\\
(\ref{eqn:phi3}) \implies \lambda = \pm \nu
\end{gather*}
and they are all non zero so we can divide equation 
\ref{eqn:lmn} by $\lambda$ to get 
\begin{equation}
  \label{eqn:epsilons}
\tilde c_0^D = \varepsilon_1 \tilde c_1^D + \varepsilon_2 \tilde c_2^D. 
\end{equation}
with
$\varepsilon_1=\mu/\lambda$ and $\varepsilon_2=\nu/\lambda$. 

We now return to our full dividing sets.
Let $D$ be the disk  where the $K_i$'s differ. Denote by $F_D$ the (common)
intersection of the $K_i$'s with $\partial D$.
Let $\xi_0$, $\xi_1$ and $\xi_2$ be contact structures partitioned by $K_0$,
$K_1$ and $K_2$ respectively and coinciding with some $\xi_b$ outside
$D\times\S^1$.

Let $\twPhi : V(D, F_D) \to \twV(\Sigma, F)$ be a HKM gluing map associated to
$\xi_b$. According to Theorem \ref{thm:ci}, there exist invertible elements $a_i$ of $\LL$
such that $\twPhi(\tilde c_i^D) = a_i \tilde \twc_i$ for all $i$. 
We now apply $\twPhi$ to equation \ref{eqn:epsilons} 
and put $a=\varepsilon_1 a_1 a_0^{-1}$ and 
$b=\varepsilon_2 a_2 a_0^{-1}$
\end{proof}

Using this Lemma, we can reprove the main result of \cite{GHV}.

\begin{prop}[\cite{GHV}]
\label{prop:GHV}
Contact structures with positive Giroux torsion have vanishing contact invariant
over $\Z$ coefficients.
\end{prop}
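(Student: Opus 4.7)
The plan is to apply the bypass relation inside the torsion region, chosen so that two of the three bypass-related dividing sets correspond to overtwisted contact structures. Lemma \ref{lem:bypass} is stated for dividing sets on a surface $\Sigma$ in the TQFT setting, but its proof via HKM gluing extends verbatim to any contact sutured manifold containing a bypass disk $D$ in a convex surface. Since $\xi$ has positive Giroux torsion, there is a contact embedding of the model $(T^2 \times [0,1], \xi_\pi)$ with $\xi_\pi = \ker(\cos(2\pi z)\,dx - \sin(2\pi z)\,dy)$ into $(V, \xi)$. Inside the torsion region I would take a convex surface $S$, for instance a convex annulus whose Legendrian boundary runs along fibers of the product structure. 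Because the Legendrian direction on $T^2 \times \{z\}$ makes a full $2\pi$ turn as $z$ varies from $0$ to $1$, the dividing set $K_0$ on $S$ contains a distinguished closed component $c_0$ coming from the level $z = 1/2$ where the Legendrian direction is again parallel to the fibers.

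I would next construct a bypass disk $D \subset S$ lying inside the torsion region and meeting $K_0$ in three arcs---all three being segments of $c_0$---in the configuration of Figure \ref{fig:bypass}. Pictorially, $D$ is a union of three small subdisks joined by thin corridors, each subdisk crossed once by $c_0$. The bypass relation then produces two alternative dividing sets $K_1, K_2$ differing from $K_0$ only inside $D$. The essential combinatorial observation is that when the three inside-arcs of $c_0 \cap D$ are rearranged, they combine with the unchanged outside-arcs of $c_0$ to form bigon-like closed loops bounding small disks in $S$ disjoint from the rest of $K_0$. By Giroux's criterion, the presence of such null-homotopic closed components on a convex surface forces the ambient contact structure to be overtwisted near $S$. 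Hence $c(\xi_1) = c(\xi_2) = 0$ by Theorem \ref{thm:ci}(2), and the bypass relation yields $c(\xi) = a\,c(\xi_1) + b\,c(\xi_2) = 0$ over $\Z$ coefficients.

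The main obstacle is the precise combinatorial check that both $K_1$ and $K_2$ acquire at least one null-homotopic component---this depends on the specific three configurations constituting the bypass triangle of Figure \ref{fig:bypass}. The geometric existence of the bypass disk $D$ is the crucial manifestation of positive $2\pi$-torsion: the full rotation of the Legendrian direction provides the closed component $c_0$ on $S$, along which $D$ can be placed. In the torsion-free case, $K_0$ has no closed components and no such $D$ exists, matching the fact that the proposition's conclusion requires the torsion hypothesis.
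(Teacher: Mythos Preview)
Your proposal has a genuine gap at precisely the step you flag as the main obstacle: the claim that both $K_1$ and $K_2$ acquire a null-homotopic closed component is false. Label the six points of $\partial D\cap c_0$ cyclically by $1,\dots,6$. From the proof of Lemma~\ref{lem:bypass} one reads off that the three bypass configurations inside $D$ are the nested matchings $K_0=(1,4)(2,3)(5,6)$, $K_1=(1,6)(2,5)(3,4)$, $K_2=(1,2)(3,6)(4,5)$. Their nine edges are pairwise disjoint, and the six remaining edges on $\{1,\dots,6\}$ form the two triangles on $\{1,3,5\}$ and $\{2,4,6\}$, which contain no perfect matching. Hence any matching $O$ (the three outside arcs of $c_0$) must share an edge with some $K_i$; and one checks in three lines that if $O$ shares an edge with two of the $K_i$ then it shares one with all three. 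Now the alternating-edge graph $K_i+O$ fails to be a single $6$-cycle exactly when $K_i$ and $O$ share an edge. Since $c_0$ is a single closed curve, $K_0+O$ is a $6$-cycle, so $O$ shares no edge with $K_0$; by the dichotomy just established, $O$ shares an edge with exactly one of $K_1,K_2$. Thus exactly one of $K_1+O$, $K_2+O$ is again a single closed curve, and that curve is homologous to $c_0$ (they agree outside the disk $D$), hence essential. So at most one of $\xi_1,\xi_2$ can be overtwisted by your mechanism, never both.

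The paper's proof sidesteps this by arranging the two surviving terms to \emph{cancel} rather than to vanish individually. Working in the annulus TQFT, the dividing sets $K_1,K_2$ of Figure~\ref{fig:div_GHV} are both tight, but after gluing a basic slice $\xi_b$ the contact structures $\xi_1\cup\xi_b$ and $\xi_2\cup\xi_b$ become $\partial$-isotopic basic slices with a common representative $\tilde c_b$, so the bypass relation reads $\tilde c(\xi_0\cup\xi_b)=(\varepsilon_1+\varepsilon_2)\tilde c_b$. A second gluing by a solid torus then makes the $\xi_0$-side overtwisted while the $\xi_1$-side embeds in a Stein fillable manifold; torsion-freeness of the relevant $\SFH$ group forces $\varepsilon_1+\varepsilon_2=0$. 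As a secondary remark, your assertion that the proof of Lemma~\ref{lem:bypass} extends verbatim to a bypass disk on an arbitrary convex surface also needs care: that proof glues along $D\times S^1$, which exists in the product $\Sigma\times S^1$ but not in a ball neighbourhood $D\times I$ of a disk in a general convex surface.
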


\begin{figure}[ht]
  \begin{center}
  	\includegraphics{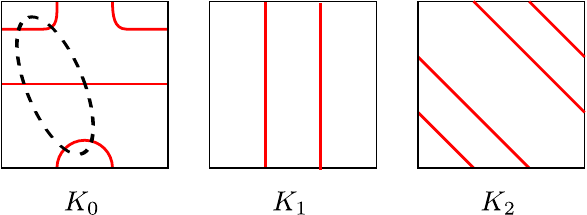}
  \end{center}
  \caption{Dividing sets for Propositions
  \ref{prop:GHV} and \ref{prop:annulus}. Left and right sides of each squares
  should be glued to get annuli.}
  \label{fig:div_GHV}
\end{figure}

\begin{proof}
Let $(A, F_A)$ be an annulus with two marked points on each boundary component
and consider the dividing sets of Figure \ref{fig:div_GHV}.  We will denote by
$\xi_0$, $\xi_1$ and $\xi_2$ contact structures partitioned by the corresponding
$K_i$. Using the disk whose boundary is dashed, one sees that $K_0$ is
bypass-related to $K_1$ and $K_2$. We denote $(A \times \S^1, F_A \times \S^1)$
by $(N,\Gamma)$.

Let $\xi_b$ be a basic slice on a toric annulus $(N',\Gamma')$. We glue
$(N,\Gamma)$ and $(N',\Gamma')$ to get a new toric annulus. Using the obvious
decomposition of $H_1(N)$ and the corresponding one for $H_1(N\cup N')$, we want
the dividing slopes to be $\infty$ (this is the slope of the $\S^1$ factor) and
$1$ respectively.  By changing the sign of the basic slice, we can assume that
$\xi_0 \cup \xi_b$ is universally tight. It follows from the classification of
tight contact structures on toric annuli that a contact manifold has positive
Giroux torsion if and only if it contains a copy of $\xi_0 \cup \xi_b$.
Therefore we only need to prove that $c(\xi_0 \cup \xi_b)$ vanishes.

Let $\Phi = \Phi_{\xi_b}$ be a corresponding HKM gluing map.  The structures
$\xi_1 \cup \xi_b$ and $\xi_2 \cup \xi_b$ are $\partial$--isotopic and they are
basic slices. Using invariance under isotopy, we get 
$c(\xi_1 \cup \xi_b) = c(\xi_2 \cup \xi_b)$.  Let $\tilde c_b$ be a
representative of this common contact invariant.  Let $\tilde c_1$ and 
$\tilde c_2$ be representatives of $c(K_1)$ and $c(K_2)$ such that 
$\tilde c_b=\Phi(\tilde c_1)=\Phi(\tilde c_1)$.  Such representatives exist 
according to the gluing property. We also take any representative 
$\tilde c(K_0) \in c(K_0)$ and denote by $\tilde c(\xi_0 \cup \xi_b)$ its image
under $\Phi$.  This image belong to $c(\xi_0 \cup \xi_b)$ according to the
gluing property.

Lemma \ref{lem:bypass} gives 
$\varepsilon_1, \varepsilon_2 \in \{\pm 1\}$ such that
\[
\tilde c(K_0) = \varepsilon_1 \tilde c_1 + \varepsilon_2\tilde c_2.
\]

We then apply $\Phi$ to this equation to get:
\begin{equation}
  \label{eqn:c_0_c_GHV}
\tilde c(\xi_0 \cup \xi_b) = (\varepsilon_1 + \varepsilon_2)\tilde c_b.
\end{equation}

Let $(W,\xi_W)$ be a standard neighborhood of a Legendrian knot ($W$ is a solid
torus). We now glue $(W,\xi_W)$  along the boundary component of $N \cup N'$
which is in $\partial N$ so that meridian curves have slope $0$. The structure
$\xi_W \cup \xi_0 \cup \xi_b$ is overtwisted whereas 
$\xi_W \cup \xi_1 \cup \xi_b$ (and $\xi_W \cup \xi_2 \cup \xi_b$ which is
isotopic to it) is a standard neighborhood of a Legendrian curve so can be
embedded into Stein fillable closed contact manifolds.
Let $\Phi_W$ be a gluing map associated to
$\xi_W$. Applying $\Phi_W$ to equation \ref{eqn:c_0_c_GHV} and using the vanishing
property of overtwisted contact structures, we get 
\[
0 = (\varepsilon_1 + \varepsilon_2)\Phi_W(\tilde c_b).
\]
Using that $\Phi_W(\tilde c_b)$ is non zero and the fact that the
relevant $\SFH$ group has no torsion (see \cite{Juhasz_sd}[Proposition 9.1])
we get $\varepsilon_1 + \varepsilon_2 = 0$. Returning
to Equation \ref{eqn:c_0_c_GHV}, we then get $c(\xi_0 \cup \xi_b) = 0$.
\end{proof}

\begin{figure}[ht]
  \begin{center}
  	\includegraphics{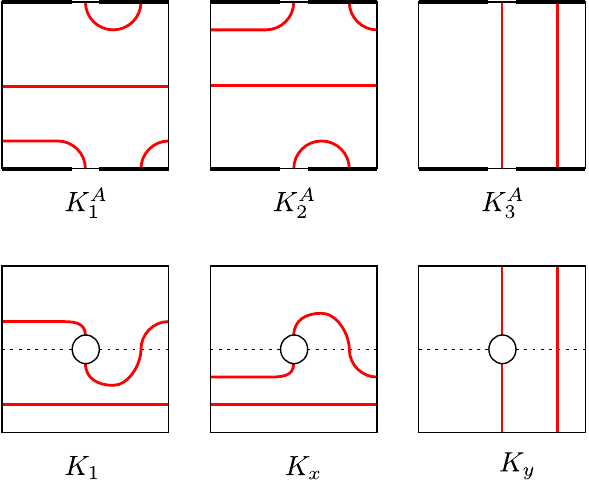}
  \end{center}
  \caption{Dividing sets for Proposition \ref{prop:annulus} and Proposition
  \ref{prop:main}. On the top row, left and right sides of the squares are glued
  to make the annulus $A$. Then the thick parts of $\partial A$ can be glued by
  translation to make the punctured torus of the bottom row where the sides of
	the squares are glued by translation and the glued part of $\partial A$ is
	dashed.}
  \label{fig:div_annulus}
\end{figure}

\begin{prop}
\label{prop:annulus}
Let $(A, F_A)$ be an annulus with two points on each boundary component. 
Let $T$ be one of the components of 
$\partial A \times \S^1$ and $t = e^{[T]} \in \Z[H_2(A \times \S^1)]$. 
Let $K^A_1$, $K^A_2$ and $K^A_3$ be the dividing sets of Figure
\ref{fig:div_annulus} and let
$\tilde\twc^A_1$, $\tilde\twc^A_2$ and $\tilde\twc^A_3$
be any representatives of their contact invariants in $\twV(A,F_A)$.
\begin{enumerate}
\item 
There exist invertible elements $a$ and $b$ in $\LL$ such that:
\[
\tilde\twc^A_1 = a\tilde\twc^A_2 + b(t-1)\tilde\twc^A_3.
\]

\item
Twisted invariants distinguish $K^A_1$, $K^A_2$ and $K^A_3$. Over $\Z$
coefficients,  $c(K^A_2)$ and $c(K^A_3)$ are independent but
$c(K^A_1) = c(K^A_2)$. 

\item
Let $\tau$ be the right handed Dehn twist along the core of $A$. There exist
$\tilde c_2 \in c(K^A_2)$ and $\tilde c_3 \in c(K^A_3)$ such that for any
$n \in \Z$, $\tilde c_3 + n \tilde c_2 \in c(\tau^n K^A_3)$.
\end{enumerate}
\end{prop}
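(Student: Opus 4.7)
For Part (1), the strategy will be to adapt the bypass argument of Lemma \ref{lem:bypass} while tracking twisted coefficients. The first step is to identify a disk $D \subset A$ in which $K^A_1, K^A_2, K^A_3$ differ and form a bypass triple in the sense of Definition \ref{def:bypass}. Since $H_2(D \times \S^1) = 0$, Lemma \ref{lem:bypass} applied inside $(D \times \S^1, F_D \times \S^1)$ gives a relation $\tilde c_0^D = \varepsilon_1 \tilde c_1^D + \varepsilon_2 \tilde c_2^D$ over $\Z$. I would then push this forward by the HKM gluing map $\twPhi \colon V(D, F_D) \otimes \LL \to \twV(A, F_A)$ associated to the partitioned contact structure on $(A \setminus D) \times \S^1$. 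The images $\twPhi(\tilde c_i^D)$ are $\Lx$--multiples of the $\tilde\twc^A_i$, and rewriting the pushed-forward relation in terms of the $\tilde\twc^A_i$ should yield an identity of the form $\tilde\twc^A_1 = a' \tilde\twc^A_2 + b' \tilde\twc^A_3$ with $a', b' \in \LL$.

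The main obstacle, and the source of the $(t-1)$ factor, will be pinning down these $\Lx$--rescalings. Two of the three should be arrangeable to coincide on the nose, but the third ought to differ by multiplication by $t$: the $\partial$--isotopy between the glued contact structure on $A \setminus D$ and its standard reference can be completed in two topologically distinct ways, differing by a $2$--chain whose homology class is $[T] \in H_2(A \times \S^1)$. This will force $b' = b(t-1)$ for an invertible $b \in \LL$. The cleanest implementation may be to use the admissible Heegaard diagram for $(A \times \S^1, F_A \times \S^1)$ furnished by the proof of Proposition \ref{prop:twV} (rank $4$, vanishing differential), identify each $\tilde\twc^A_i$ with a specific generator up to $\Lx$, and read off the relation by tracking which relative homotopy classes of disks enter the identifications.

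For Part (2), I will assume (1). Since $t-1 \ne 0$ in $\LL$ and $a$ is a unit, $\tilde\twc^A_1$ is not an $\Lx$--multiple of either $\tilde\twc^A_2$ or $\tilde\twc^A_3$ alone, so the three twisted invariants are pairwise non-proportional. Setting $t = 1$ collapses the relation to $\tilde c^A_1 = \pm \tilde c^A_2$, which yields $c(K^A_1) = c(K^A_2)$ over $\Z$. To show that $c(K^A_2)$ and $c(K^A_3)$ are $\Z$--linearly independent I would note that they sit inside the rank--$2$ graded summand of $V(A, F_A)$ provided by Proposition \ref{prop:twV} at $t = 1$, and separate them by gluing: attaching a basic slice with a suitable slope produces toric annuli in which the partitioned contact structures carried by $K^A_2$ and $K^A_3$ are non-isotopic universally tight, so Theorem \ref{thm:ci}(6) together with the known structure of $V$ for toric annuli forces independence of their invariants before the gluing.

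For Part (3), I would proceed by induction on $|n|$. Inspection of Figure \ref{fig:div_annulus} should show that $K^A_3$, $\tau K^A_3$ and $K^A_2$ are bypass-related inside a disk $D \subset A$, so Lemma \ref{lem:bypass} over $\Z$ yields $\tilde c(\tau K^A_3) = \varepsilon_1 \tilde c_3 + \varepsilon_2 \tilde c_2$ with $\varepsilon_i \in \{\pm 1\}$. I would then fix representatives $\tilde c_2 \in c(K^A_2)$ and $\tilde c_3 \in c(K^A_3)$ so that $\varepsilon_1 = \varepsilon_2 = 1$, establishing the case $n = 1$. The inductive step applies the identical bypass move, now inside the disk $\tau^{n-1}(D)$, and gives $\tilde c(\tau^n K^A_3) = \tilde c(\tau^{n-1} K^A_3) + \tilde c_2$. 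The signs $\varepsilon_i$ coming out of Lemma \ref{lem:bypass} will be the same at every step because the HKM gluing map depends only on the diffeomorphism type of the complement of the bypass disk, which $\tau$ leaves unchanged up to a rigid shift. Iterating gives the formula for $n \ge 1$, and $n < 0$ is handled by the symmetric argument with $\tau^{-1}$.
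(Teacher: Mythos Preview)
Your approach to Part~(1) has a fundamental problem: the dividing sets $K^A_1$, $K^A_2$, $K^A_3$ are \emph{not} bypass-related in a single disk, and in fact they cannot be. If they were, Lemma~\ref{lem:bypass} would produce a relation $\tilde\twc^A_1 = a\tilde\twc^A_2 + b\tilde\twc^A_3$ with both $a,b \in \Lx$ invertible. But the relation you are asked to prove has the non-unit $b(t-1)$ in front of $\tilde\twc^A_3$. Since $\tilde\twc^A_2$ and $\tilde\twc^A_3$ are $\LL$--linearly independent (their reductions over $\Z$ are independent by Part~(2), and $\twV(A,F_A)$ is free over the domain $\LL$), these two relations are incompatible. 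Your attempt to manufacture the factor $t-1$ from an ``isotopy completed in two ways differing by $[T]$'' cannot work: such an ambiguity would at best multiply an invertible coefficient by the unit $t$, never produce the non-unit $t-1$. The paper's argument is entirely different. It starts from the soft fact that all three invariants lie in the same rank--two graded summand of $\twV(A,F_A)$, so some relation $\lambda\tilde\twc^A_1 = \mu\tilde\twc^A_2 + \nu\tilde\twc^A_3$ holds. The coefficients are then determined by applying two HKM gluing maps, each stacking a second copy of $(A,F_A)$ divided by $K^A_1$ or by $K^A_2$. Under each such gluing one of the resulting contact structures is overtwisted, one is a generalized Lutz modification (and the Ghiggini--Honda formula from \cite{GH} supplies the factor $t-1$), and one is isotopic to the original. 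Solving the resulting equations over $\LL$ gives the claim. The factor $t-1$ is thus a genuine output of the Lutz--twist formula, not a bypass artefact.

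Your Part~(3) also has a gap in the sign determination. Fixing representatives so that $\tilde c_3 + \tilde c_2 \in c(\tau K^A_3)$ handles $n=1$, but once $\tilde c_2,\tilde c_3$ are fixed the signs at every other step are forced, not chosen: Lemma~\ref{lem:bypass} only tells you that \emph{some} sign combination works, and the wrong one would yield $c(\tau^n K^A_3) = c(\tau^{n-2} K^A_3)$ rather than the desired formula. Your assertion that ``the signs will be the same at every step because the gluing map depends only on the diffeomorphism type of the complement'' does not resolve this, since the HKM gluing map is itself only well-defined up to sign, and you still need to know how $\tau_*$ acts on the chosen representative $\tilde c_2$. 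The paper settles the sign by an external input: it glues the two boundary components of $A$ to produce $\T^3$ and invokes Theorem~\ref{thm:t3} to see that the \OS invariants of the resulting contact structures distinguish $\tau K^A_3$ from $\tau^{-1}K^A_3$, which rules out the bad sign. The same mechanism drives the inductive step.
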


The second part of this proposition was proved \over $\Z$ coefficients in
Section 7.5 of \cite{HKM_tqft}. The last part was conjectured in
\cite{HKM_tqft}[top of page 35].

\begin{proof}
The statement of the proof contains $A$ superscripts everywhere in view of its
application to Proposition \ref{prop:main} but we don't use them in this proof
since it would clutter all formulas.

Thanks to grading, the twisted invariants $\tilde\twc_1$, $\tilde\twc_2$ and
$\tilde\twc_3$ all live in the same rank two summand of $\twV(A, F_A)$ so
there exist $\lambda, \mu, \nu \in \LL$, not all zero, such that 
\begin{equation}
  \label{eqn:ann1}
\lambda \tilde\twc_1 = \mu \tilde\twc_2 + \nu \tilde\twc_3.
\end{equation}

We now use two HKM gluing maps: $\twPhi_1$ (resp. $\twPhi_2$) corresponding to
gluing the dividing set $K_1$ (resp. $K_2$) from the bottom in Figure
\ref{fig:div_annulus}. We will denote loosely by $K_1 \cup K_1$ for instance
the result of gluing $K_1$ on the bottom of $K_1$. For any $\xi$ in
partitioned by $K_1$ we can perform a generalized Lutz twist on the unique
torus which is foliated by Legendrian fibers and the result is partitioned by
$K_1 \cup K_1$ so the main result of \cite{GH} gives
$\twPhi_1(\tilde\twc_1) = d(t-1)\tilde\twc_1$
for some invertible element $d$.  Since contact structures partitioned by $K_1
\cup K_2$ are overtwisted, we get $\twPhi_1(\tilde\twc_2) = 0$. And 
$K_1 \cup K_3$ is isotopic to $K_1$ so there is some invertible $e$ such that 
$\twPhi_1(\tilde\twc_3) = e\tilde\twc_1$. So when we apply $\twPhi_1$ to
equation \ref{eqn:ann1} we get: 
$\lambda d(t-1)\tilde\twc_1 = \nu e\tilde\twc_1$.

A similar argument for $\twPhi_2$ gives invertible elements $f$ and $g$ such that:
\[
\mu f(t-1)\tilde\twc_2 + \nu g\tilde\twc_2 = 0.
\]
Since $\twSFH(A, F_A)$ is a free module over the integral domain $\LL$ and 
$\tilde\twc_1$ and $\tilde\twc_2$ are non zero (the corresponding contact
structures embed into Stein fillable contact manifolds), we get
\begin{gather*}
\lambda d(t-1) = \nu e \\
\mu f(t-1) + \nu g = 0.
\end{gather*}
so that $\nu = \lambda e^{-1} d(t-1)$ and $\mu = -f^{-1}ge^{-1}d\lambda$. Since
$\lambda$, $\mu$ and $\nu$ are not all zero, we get that $\lambda$ is non zero.
Setting $a = -f^{-1}ge^{-1}d$ and $b = e^{-1} d$, equation \ref{eqn:ann1} gives
the announced relation.

We now prove the second point. We have already met morphisms sending
$\tilde\twc_1$, $\tilde\twc_2$ and $\tilde\twc_3$ to elements not related to
each other by invertible elements of $\LL$. So the invariants $\twc(K_i)$ are
pairwise distinct. Going to $\Z$ coefficients sends $t - 1$ to zero so the
formula of the first point proves that \OS invariants \over $\Z$ coefficients
don't distinguish $K_1$ and $K_2$. But they distinguish $K_1$ and $K_3$ as can
be seen for instance by using the $\Z$ coefficients version of $\twPhi_1$.

In order to prove the third point we will use the results of Section \ref{S:t3}.

Figure \ref{fig:div_GHV} shows that $K_2$, $K_3$ and $\tau^{-1}K_3$ are
bypass related. We start with any representatives for the relevant invariants
and Lemma \ref{lem:bypass} gives us instructions to change signs so that we get 
$\tilde c_2 \in c(K_2)$ and $\tilde c_3 \in c(K_3)$ with
$\tilde c_3 -  \tilde c_2 \in c(\tau^{-1} K_3)$.

We now stick to these representatives. Using the image of Figure
\ref{fig:div_GHV} under $\tau$, we see that $K_2$, $K_3$ and $\tau K_3$
are bypass related. So Lemma \ref{lem:bypass} gives signs $\varepsilon$ and
$\varepsilon'$ such that $\varepsilon \tilde c_2 + \varepsilon'\tilde c_3$ is in
$c(\tau K_3)$. We set $\varepsilon_1 = \varepsilon\varepsilon'$ so that
$\tilde c_2 + \varepsilon_1\tilde c_3$ is in $c(\tau K_3)$. We want to prove
that $\varepsilon_1 = 1$. the only other possibility, $\varepsilon_1 = -1$ would
give $c(\tau^{-1} K_3) = c(\tau K_3)$ but this is forbidden by Theorem
\ref{thm:t3} since the corresponding contact structures are sent by gluing the
two components of $A$ to contact structures on $\T^3$ which are distinguished by
\OS invariants. So $\tilde c_2 + \tilde c_3$ is in $c(\tau K_3)$. The general
case follows from an inductive process using the same arguments.
\end{proof}

\begin{figure}[ht]
  \begin{center}
  	\includegraphics{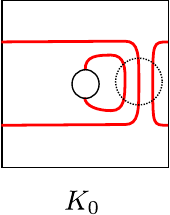}
  \end{center}
  \caption{Dividing set for Proposition \ref{prop:main}. Sides of the square are
  		   glued to make a punctured torus. The dashed circle bounds a disk used
		   to apply Lemma \ref{lem:bypass}}
  \label{fig:K_0}
\end{figure}

\begin{prop}
  \label{prop:main}
Let $\Sigma_0$ be a punctured torus, $F_0$ a set of two points on
$\partial\Sigma_0$ and $K_0$ a dividing set on $\Sigma_0$ consisting of a
circle and an arc, both boundary--parallel (see Figure \ref{fig:K_0}). Let 
$(x, y)$ be the image in $\LL = \Z[H_2(\Sigma_0 \times \S^1)]$ of a basis of 
$H_2(\Sigma_0 \times \S^1)$. Let $K_x$ and $K_y$ be dividing sets on $\Sigma_0$
made of a boundary parallel arc and one closed curve whose lift in 
$H_2(\Sigma_0 \times \S^1)$ has homology class $x^{\pm 1}$ and $y^{\pm 1}$
respectively, see Figure \ref{fig:div_annulus}. 
Let $\tilde \twc_0$, $\tilde \twc_x$ and $\tilde \twc_y$ be
any representatives of $\twc(K_0)$, $\twc(K_x)$ and $\twc(K_y)$ respectively.
In $V(\Sigma_0, F_0)$, \[c(K_0) = 0\] whereas there exist invertible elements
$\lambda$, $\mu$ in $\LL$ such that, in $\twV(\Sigma_0, F_0)$,
\[\tilde\twc_0 = \lambda(y - 1)\tilde\twc_x + \mu(x - 1)\tilde\twc_y.\]
\end{prop}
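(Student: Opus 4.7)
The plan is to derive the twisted formula first; the untwisted vanishing $c(K_0)=0$ then follows by reducing modulo the augmentation $x,y\mapsto 1$, which annihilates both factors $(x-1)$ and $(y-1)$. The derivation combines one application of the bypass relation (Lemma~\ref{lem:bypass}) with two applications of the annulus relation (Proposition~\ref{prop:annulus} part~1), one in each of the two generating directions of the punctured torus.

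First, I would apply Lemma~\ref{lem:bypass} at the bypass disk bounded by the dashed circle in Figure~\ref{fig:K_0}. This yields
\[\tilde\twc_0 = a\,\tilde\twc_{K'_0} + b\,\tilde\twc_{K''_0}\]
for some $a,b\in\Lx$, where $K'_0$ and $K''_0$ are the two other bypass configurations, sharing $K_0$ outside the dashed disk. By direct inspection of the bypass triangle applied to the boundary-parallel arc and boundary-parallel circle of $K_0$, one should find that $K'_0$ consists of a boundary-parallel arc together with a non-separating closed curve in the $x$-direction and an extra boundary-parallel circle, while $K''_0$ is the analogous dividing set with a non-separating curve in the $y$-direction.

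Next, I would treat $K'_0$ and $K''_0$ symmetrically using Proposition~\ref{prop:annulus} part~1. For $K'_0$, choose an annulus $A\subset\Sigma_0$ whose core is parallel to the non-separating $x$-direction curve of $K'_0$ and which contains the extra boundary-parallel circle, arranged so that the boundary components of $A\times\S^1$ have homology class $y$ (so that the variable $t$ of Proposition~\ref{prop:annulus} maps to $y$ in $\LL$ under the HKM gluing map). Inside $A$, the dividing set $K'_0$ then matches the configuration $K^A_1$, while $K_x$ matches $K^A_3$; the intermediate configuration $K^A_2$ extends under the surrounding dividing data to a dividing set on $\Sigma_0$ whose corresponding contact structure is overtwisted, so its invariant vanishes by Theorem~\ref{thm:ci} part~(2). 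Pushing the annulus formula $\tilde\twc^A_1 = a'\tilde\twc^A_2 + b'(t-1)\tilde\twc^A_3$ forward under the HKM gluing map, the middle term drops out and we obtain
\[\tilde\twc_{K'_0} = \lambda'(y-1)\tilde\twc_x\]
for some $\lambda'\in\Lx$. The analogous argument for $K''_0$, with an annulus in the other direction and $t$ mapping to $x$, gives $\tilde\twc_{K''_0}=\mu'(x-1)\tilde\twc_y$.

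Substituting into the bypass relation produces the desired identity with $\lambda=a\lambda'$ and $\mu=b\mu'$. The main obstacle I anticipate is the geometric step of identifying the bypass products $K'_0$ and $K''_0$ and of exhibiting the two annuli so that the three configurations of Proposition~\ref{prop:annulus} match as claimed. In particular, verifying that the middle term $\tilde\twc^A_2$ pushes forward to zero rests on a contact-geometric check that the glued dividing set is isolating and overtwisted, analogous in spirit to the overtwisted vanishings used inside the proof of Lemma~\ref{lem:bypass}.
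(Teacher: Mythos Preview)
Your proposal has a genuine gap in the geometric identification of the bypass products, and this makes the whole symmetric strategy collapse.

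The bypass at the dashed circle of Figure~\ref{fig:K_0} does \emph{not} produce a symmetric pair $K'_0,K''_0$ with essential curves in the $x$-- and $y$--directions respectively. As the paper states explicitly, the two other members of that bypass triangle are $K_1$ and $K_x$ from Figure~\ref{fig:div_annulus}: one has a single essential $x$--curve with a boundary-parallel arc, the other has \emph{two} parallel essential curves (both in the same direction) with a boundary-parallel arc. Neither contains an ``extra boundary-parallel circle'' and neither involves the $y$--direction. So the bypass relation reads
\[
\tilde\twc_0 \;=\; d\,\tilde\twc_1 \;+\; e\,\tilde\twc_x,
\]
which is asymmetric in $x$ and $y$. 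Your planned second application of Proposition~\ref{prop:annulus} ``in the $y$--direction'' has no geometric input to act on. Likewise, your claim that the $K^A_2$ term pushes forward to something overtwisted is false for the annulus actually present: in the paper's gluing, $K^A_2$ maps to $K_x$, whose contact structure embeds in a Stein fillable manifold and has nonzero invariant.

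What the paper actually does is: apply Proposition~\ref{prop:annulus} \emph{once} (pushed forward by the gluing map $\Phi_A$) to replace $\tilde\twc_1$ by a combination of $\tilde\twc_x$ and $(x-1)\tilde\twc_y$, giving
\[
\tilde\twc_0 \;=\; (r+e)\,\tilde\twc_x \;+\; \mu(x-1)\,\tilde\twc_y
\]
with $r\in\Lx$. The entire difficulty is then to show that the coefficient $r+e$ (a sum of two units, not itself obviously a unit times anything) equals $\lambda(y-1)$ for some $\lambda\in\Lx$. This cannot be read off from Proposition~\ref{prop:annulus}; the paper extracts it by a further gluing (Figure~\ref{fig:strange_formula}): one glues in an annulus with two boundary-parallel arcs so that $K_0$ becomes overtwisted while $K_x$ and $K_y$ each become a generalized Lutz modification of the \emph{same} dividing set. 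The Ghiggini--Honda formula then forces $(x-1)(r+e)+(y-1)\mu'(x-1)=0$ in a free $\LL$--module, whence $r+e$ is a unit multiple of $(y-1)$. Your outline contains nothing corresponding to this step, and without it the coefficient of $\tilde\twc_x$ remains undetermined. The reduction to untwisted coefficients that you propose at the end is correct in principle, but it rests on a twisted formula you have not established.
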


Before proving this proposition we discuss its application to Theorem
\ref{thm:leg} from the introduction.  Let $T$ be a torus obtained by filling the
boundary of $\Sigma_0$ with a disk $D$. Let $a$ be an arc in $D$ with boundary
$F_0$ which extends smoothly $K_0$ to a closed multicurve $\bar K_0$ in $T$. Let
$V$ be a circle bundle over $T$ with Euler number $\pm 1$. There is an
overtwisted contact structure $\xi$ on $V$ partitioned by $\bar K_0$ and the
fiber over any point of $a$ is a null homologous Legendrian knot. The
restriction of $\xi$ to the solid torus over $D$ is a standard Legendrian
neighborhood of $L$ according to the easiest case of the classification of tight
contact structures on solid tori. So $c(K_0)$ can be seen as the sutured
invariant of the Legendrian knot $L$ and we proved Theorem \ref{thm:leg}. Many
more examples of this situation can be constructed using Theorem \ref{thm:tqft}
above.

In the above Proposition \ref{prop:main}, the formula for the twisted invariant
clearly implies vanishing of the untwisted invariant but, for the benefit of
readers which are not interested in twisted coefficients, we will explain how to
get directly the vanishing result.

\begin{proof}
Using the disk whose boundary is dashed on Figure \ref{fig:K_0}, one sees 
that $K_0$ is bypass-related to $K_1$ and $K_x$ from Figure \ref{fig:div_annulus}.

The dividing sets $K_1$ and $K_x$ are obtained from the dividing sets of
Proposition \ref{prop:annulus} as explained in Figure \ref{fig:div_annulus}. 
Let $\Phi_A$ be
a HKM gluing map associated to the thick annuli of this figure, glued by
translation. Let $\tilde\twc^A_i$, $i = 1, 2, 3$ be representatives of the
$\twc(K^A_i)$. We know there are invertible elements $f$, $g$, $h$ in $\LL$ such that
$\Phi_A(\tilde\twc^A_1) = f^{-1}\tilde \twc_1$,
$\Phi_A(\tilde\twc^A_2) = g\tilde \twc_x$ and
$\Phi_A(\tilde\twc^A_3) = h\tilde \twc_y$. 

Lemma \ref{lem:bypass} gives 
$d, e \in \Lx$ such that
\begin{equation}
  \label{eqn:c_0_c}
\tilde \twc_0 = d \tilde\twc_1 + e \tilde\twc_x.
\end{equation}

We then apply $\Phi_A$ to the equation of Proposition \ref{prop:annulus} to get
\begin{equation}
  \label{eqn:c_0_cA}
	\tilde \twc_0 = (r + e)\tilde\twc_x + \mu(x - 1)\tilde\twc_y.
\end{equation}
where $r = dfag$ and $\mu =  dfhb$ are invertible.

We first prove quickly vanishing of the untwisted invariant and then we'll turn
again to twisted coefficients. Over $\Z$ coefficients, the preceding equation
reduces to 
\begin{equation}
  \label{eqn:c_0_cA_Z}
	\tilde \twc_0 = (\varepsilon_1 + \varepsilon_2)\tilde\twc_x
\end{equation}
where $\varepsilon_1$ and $\varepsilon_2$ are signs.

Let $D$ be a disk divided by an arc $K_D$ and $\xi_D$ a 
contact structure on $D \times \S^1$ partitioned by $K_D$. 
We now glue $(\Sigma_0, K_0)$ to $(D, K_D)$ and consider a
HKM gluing map $\Phi : V(\Sigma_0, F_0) \to \HFhat(\T^3)$ given by $\xi_D$.

According to Giroux's criterion (contained in the first part of Theorem
\ref{thm:classif_partition}), $\xi_0 \cup \xi_D$ is overtwisted.
Since overtwisted contact structures have vanishing invariant, we get 
$\Phi(\tilde c_0) \in c(\xi_0 \cup \xi_D) = 0$. So equation \ref{eqn:c_0_cA_Z} gives
$0 = (\varepsilon_1 + \varepsilon_2) \Phi(\tilde c_x)$. In addition
$\Phi(\tilde c_x) \in c(\xi_1 \cup \xi_D)$ is non zero because 
$\xi_1 \cup \xi_D$ is Stein fillable. Since $\HFhat(\T^3)$ has no torsion (see
Section \ref{S:t3}), we get
that $\varepsilon_1 + \varepsilon_2 = 0$ and $\tilde c_0 = 0$ so $c(K_0) = 0$. 

We now return to twisted coefficients. We glue in an annulus divided by two
boundary parallel arcs, see Figure \ref{fig:strange_formula}.
\begin{figure}[ht]
  \begin{center}
  	\includegraphics{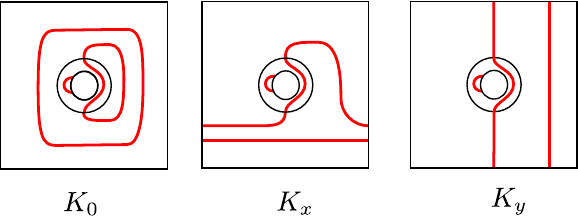}
  \end{center}
	\caption{A gluing for Proposition \ref{prop:main}. Opposite edges of each
	square are glued to get punctured tori.}
  \label{fig:strange_formula}
\end{figure}
When glued to $K_0$ we get an overtwisted contact structure while $K_x$ and
$K_y$ lead to generalized Lutz modifications on the same dividing set $K$. Let
$\tilde\twc$ be a representative of $c(K)$ (which is not zero since its $\Z$
coefficient projection does not vanish).
Using the Ghiggini--Honda formula \cite{GH}, we get invertible elements $u$ and $v$ in
$\LL$ such that 
\[
	0 = (x - 1)u(r + e)\tilde\twc + (y - 1)v\mu(x - 1)\tilde\twc.
\]
We can now use that $\twV$ is a free $\LL$ module (Proposition \ref{prop:twV})
and $\LL$ is an integral domain to get $r + e = -u^{-1}v\mu(y - 1)$ so that
Equation \eqref{eqn:c_0_cA} gives the expected formula with 
$\lambda = -u^{-1}v\mu$.
\end{proof}

Now that we have Proposition \ref{prop:main}, the following proof is almost
identical to that of \cite{HKM_tqft}[Proposition 7.12]

\begin{proof}[Proof of Theorem \ref{thm:tqft}]
First remark that $c(K) = 0$ if $K$ has an isolated annulus  because the
corresponding contact structures have non zero Giroux torsion. Then we will use
two nested inductive proofs to get the general result.

We now start an induction on the number of boundary components of
isolated regions. 

First assume that $K$ has an isolated region $\Sigma_0$ whose boundary 
is connected. We prove the theorem by
induction on the genus of $\Sigma_0$. If this genus is zero then any contact
structure partitioned by $K$ is overtwisted hence $c(K) = 0$.
If this genus is one then $\Sigma_0$ is a
punctured torus and $\Sigma$ contains a sub-surface satisfying the assumptions
of Proposition \ref{prop:main} so, by this proposition and Theorem
\ref{thm:ci}, $c(K) = 0$. Assume now that the theorem is
proved when $K$ has an isolated region with connected boundary and genus at most
$g-1 \geq 1$. If $K$ has an isolated region with genus $g > 1$ then $(\Sigma, K)$ has
a subsurface $(\Sigma_1, K_1)$ drawn on the left-hand side of Figure
\ref{fig:recursion} where the sides of the square are glued pairwise and the
shaded disk hides a subsurface having genus $g-1$ and not intersecting $K$. The
dashed circle shows that $K_1$ is bypass-related to $K_1$ and $K_2$. Since $K_2$
has an isolated punctured torus and $K_3$ has an isolated region with genus 
$g - 1$, the inductive hypothesis gives $c(K_2) = c(K_3) = 0$. Lemma
\ref{lem:bypass} combines these two vanishings to give $c(K_1) = 0$. This
implies $c(K) = 0$ thanks to Theorem \ref{thm:ci}. Hence the inductive step is
completed and any $K$ having an isolated region with connected boundary has
vanishing invariant.

\begin{figure}[ht]
  \begin{center}
  	\includegraphics{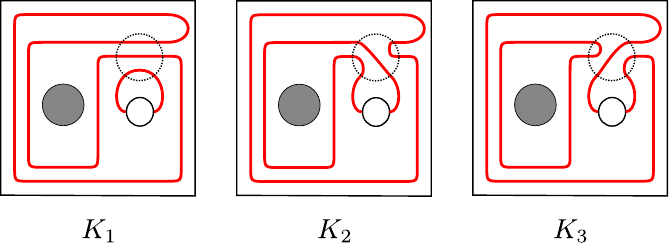}
  \end{center}
  \caption{First inductive step in the proof of Theorem \ref{thm:tqft}. The sides of
  the squares are glued by translation and the shaded disk hides more genus.}
  \label{fig:recursion}
\end{figure}

\begin{figure}[ht]
  \begin{center}
  	\includegraphics{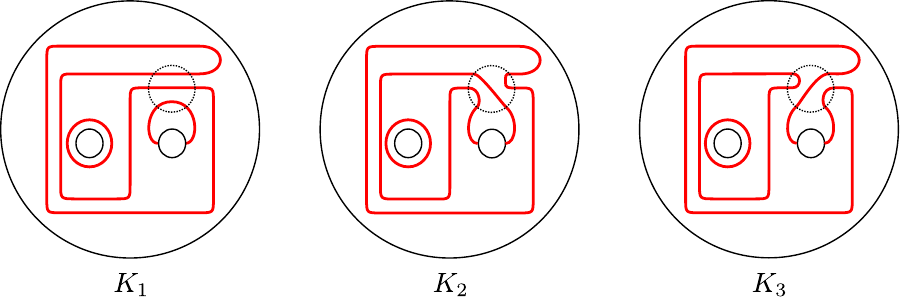}
  \end{center}
  \caption{Second inductive step in the proof of Theorem \ref{thm:tqft}.}
  \label{fig:recursion2}
\end{figure}

We now prove the induction step for our original inductive proof. 
We assume the theorem is proved for any dividing set having an
isolated region with at most $r-1 \geq 1$ boundary components. Suppose
$K$ has an isolated region
$\Sigma_0$ with $r > 1$ boundary components. We can assume that $\Sigma_0$ is
not an annulus since this case is already known. Also, at least one boundary 
component $\gamma$ of $\Sigma_0$ is adjacent to another region whose closure
meets $K \setminus \gamma$. Then $\Sigma$ has a subsurface $\Sigma'$ which is a
twice punctured disk whose intersection with $K$ is $K_1$ shown on Figure
\ref{fig:recursion2}. This figure also shows the intersections  with $\Sigma'$
of dividing sets $K_2$ and $K_3$ which are bypass-related to $K$. The dividing
set $K_2$ has an isolated region with $r-1$ boundary components. One of them is
the outermost thick circle of Figure \ref{fig:recursion2}, the other ones are
not in $\Sigma'$. So $c(K_2) = 0$ by inductive assumption.
The dividing set $K_3$ has an annular isolated region so $c(K_3) = 0$.
Lemma \ref{lem:bypass} combines these two vanishings to
give $c(K) = 0$.
\end{proof}

\begin{proof}[Proof of the main theorem]
Let $V$ be a Seifert manifold over an orbifold $B$ whose base has genus at least
three. Let $K_0$ be the multi-curve of Figure \ref{fig:main_example} where $B$
continues to the right and all exceptional points of $B$ are in the right
hand side of the picture. Let $\tau$ be the right-handed Dehn twist around the
thick (black) curve of Figure \ref{fig:main_example}. Theorem
\ref{thm:classif_partition} associates to the $\tau^n(K_0)$'s infinitely many
isotopy classes of universally tight torsion free contact structures. Note that
the genus hypothesis is used here to ensure that our contact structures are
torsion free.  Proposition \ref{prop:main} and Theorem \ref{thm:ci} ensure that
they all have vanishing contact invariant \over $\Z$ coefficients since there
dividing sets all contain a copy the dividing set of Proposition
\ref{prop:main}.
\end{proof}

\begin{figure}[ht]
  \begin{center}
  	\includegraphics[scale=.6]{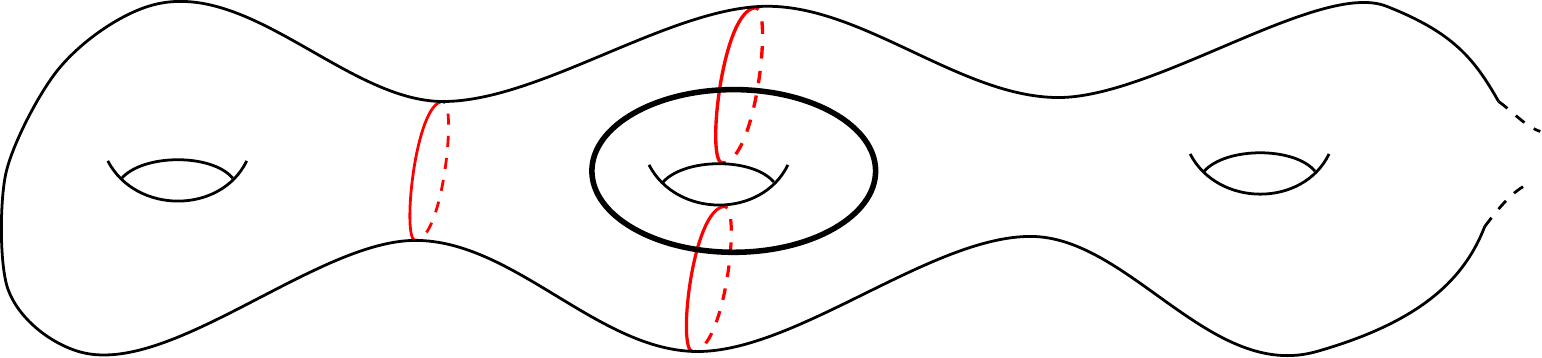}
  \end{center}
  \caption{Universally tight torsion free contact structures with vanishing
  contact invariants. The partitioning curve is thin (and red)}
  \label{fig:main_example}
\end{figure}

\paragraph{Acknowledgements.} I learned that the existence of torsion free
contact structures with vanishing \OS invariants was open during the Workshop on
Symplectic Geometry, Contact Geometry and Interactions in Strasbourg in January
2009. I thank the organizers of this event. I benefited from conversations about
Heegaard--Floer homology with András Juhász, Tom Mrowka, Peter Ozsváth, András
Stipsicz and a lot of conversations with Paolo Ghiggini. This project was also
stimulated by conversations with Chris Wendl who works on the same questions on
the embedded contact homology side of the story. In addition, Chris Wendl and
Paolo Ghiggini helped me to find an error in a draft of this text.

This work was partially supported by ANR grants ``Symplexe'' and ``Floer
Power''.

\bibliographystyle{alpha}
\small

\bibliography{sfh}

\vspace{.3cm}
\noindent
\textsc{École Normale Supérieure de Lyon, 69000 Lyon, France}

\vspace{.3cm}
\noindent
Current adress:

\noindent
\textsc{Université Paris Sud 11, 91430 Orsay, France}
\end{document}